\documentclass{article}

 \usepackage{graphicx}

\usepackage{amsmath, bm,amssymb, amsthm, enumerate}

\usepackage{amsmath,amssymb,amsthm,enumerate,mathrsfs}
\newtheorem{theorem}{Theorem}[section]
\newtheorem{corollary}[theorem]{Corollary}
\newtheorem{lemma}[theorem]{Lemma}
\newtheorem{proposition}[theorem]{Proposition}
\theoremstyle{definition}
\newtheorem{definition}[theorem]{Definition}
\newtheorem{exam}[theorem]{Example}
\newtheorem{assumption}{Assumption}[section]

\newtheorem{remark}[theorem]{Remark}
\newtheorem{condition}{Condition}[section]

\def\[{\begin{equation}}
\def\]{\end{equation}}

\def\nn{\nonumber}
\def\t{\top}

\def\C{{\mathscr C}}

\numberwithin{equation}{section}

%
\setlength\topmargin{-12pt}
\setlength\headheight{21.6pt}
\setlength\headsep{16.8pt}

\setlength\textheight{20.8cm}
\setlength\textwidth{14cm}


\begin{document}
\makeatletter

\begin{center}
\large{\bf Properties of the solution set of generalized polynomial complementarity problems}
\end{center}\vspace{5mm}
\begin{center}

\textsc{Liyun Ling,\quad Chen Ling \;\; and\;\; Hongjin He}\end{center}

\vspace{2mm}

\footnotesize{
\noindent\begin{minipage}{14cm}
{\bf Abstract:}
In this paper, we consider the {\it generalized polynomial complementarity problem} (GPCP), which covers the recently introduced {\it polynomial complementarity problem} (PCP) and the well studied {\it tensor complementarity problem} (TCP) as special cases. By exploiting the structure of tensors, we first show that the solution set of GPCPs is nonempty and compact when a pair of leading tensors is cone {\bf ER}. Then, we study some topological properties of the solution set of GPCPs under the condition that the leading tensor pair is cone ${\bf R}_0$. Finally, we study a notable global Lipschitzian error bound of the solution set of GPCPs, which is better than the results obtained in the current PCPs and TCPs literature. Moreover, such an error bound is potentially helpful for finding and analyzing numerical solutions to the problem under consideration.
\end{minipage}
 \\[5mm]

\noindent{\bf Keywords:} {Generalized complementarity problem, Polynomial complementarity problem, Error bound, {\bf ER}-tensor}\\
\noindent{\bf Mathematics Subject Classification:} {15A18, 15A69, 65F15, 90C33}

\hbox to14cm{\hrulefill}\par

\section{Introduction}\label{Introd}
Let $F,G: \mathbb{R}^n\rightarrow \mathbb{R}^n$ be two given continuous functions and $K\subset\mathbb{R}^n$ be a closed convex cone. The classical {\it generalized complementarity problem} (GCP) is to find a vector ${\bm x}\in \mathbb{R}^n$ such that
\[\label{GCP}
F({\bm x})\in K,~~G({\bm x})\in K^\ast,~~{\rm and}~~\langle F({\bm x}),G({\bm x})\rangle=0,
\]
where $\langle\cdot,\cdot\rangle$ denotes the standard inner product in real Euclidean space, and $K^*$ is the dual cone of $K$, which is defined by
$$K^*:=\left\{\bm y\in \mathbb{R}^{n}~|~\langle \bm y,\bm x\rangle\geq 0,~\forall ~\bm x\in K\right \}.$$
It is easy to see that $K^*$ is also a closed convex cone in $\mathbb{R}^n$. In what follows, we denote \eqref{GCP} by ${\rm GCP}(F,G,K)$ for simplicity. Clearly, ${\rm GCP}(F,G,K)$ has a unified form including the typical {\it nonlinear complementarity problem} (NCP, $K:={\mathbb R}^n_+$ and $G({\bm x}):={\bm x}$ in \eqref{GCP}) as a special case, which has a large amount of applications in engineering, economics, finance, and robust optimization, e.g., see \cite{FP03,FP97,Wal13,Zha07} and the references therein.

In the past decade, it has been well documented that tensor, which is a natural extension of matrix, is a powerful tool for the treatment of many real-world problems, such as the biomedical data processing \cite{BACGWM15,BV08,QYW10,ZZZXC16}, higher-order Markov chains \cite{NQZ09}, $n$-person noncooperative games (also named as multilinear games) \cite{HQ16}, best-rank one approximation in data analysis \cite{QWW09}, hypergraphs \cite{CCLQ18}, to name just a few. Therefore, the complementarity problem with tensors, which is called {\it tensor complementarity problem} (TCP) in the seminal works \cite{SQ15b,SQ16}, also have attracted a lot of considerable attention in optimization community, e.g., see \cite{BHW16,CQW16,DQW13,GLQX15,SQ15a,SQ14,WHB16} and the references therein. For given positive integers $m$ and $n$, we call $\mathcal{A}= (a_{j_1j_2\ldots j_m} )$, where $a_{j_1j_2\ldots j_m} \in \mathbb{R}$ for $1\leq j_1, j_2,\ldots, j_m\leq n$, an $m$-th order $n$-dimensional real square tensor and denote by $\mathbb{T}_{m,n}$ the space of $m$-th order $n$-dimensional real square tensors. For a tensor $\mathcal{A}=(a_{j_1j_2\ldots j_m})\in \mathbb{T}_{m,n}$ and a vector $\bm x=(x_1,x_2,\ldots,x_n)^\top\in \mathbb{R}^n$, we define $\mathcal{A}{\bm x}^{m-1}$ being a vector whose $j$-th component is given by
$$
(\mathcal{A}{\bm x}^{m-1})_j=\sum_{j_2,\ldots,j_m=1}^na_{jj_2\ldots j_m}x_{j_2}\cdots x_{j_m},\quad j=1,2,\ldots,n.
$$
With the above notation, the concrete form of TCP is specified as $F(\bm x):=\mathcal{A}\bm x^{m-1}+\bm q$ with a vector $\bm q\in \mathbb{R}^n$, $G({\bm x}):={\bm x}$ and $K:={\mathbb R}^n_+$ in \eqref{GCP}, denoted by ${\rm TCP}(\mathcal{A},\bm q)$. When the order of ${\mathcal A}$ is $m=2$, ${\rm TCP}(\mathcal{A},\bm q)$ immediately reduces to the deeply developed {\it linear complementarity problem} (LCP). Here, we only refer to the monograph \cite{CPS92} for more information on LCPs.

It is well known that every polynomial $H:\mathbb{R}^n\rightarrow \mathbb{R}^n$ of degree $(s-1)$ can be expressed in the form of
$$
H(\bm x):=\sum_{k=1}^{s-1}\mathcal{C}^{(k)}\bm x^{s-k}+\bm q,
$$
where $\mathcal{C}^{(k)}\in \mathbb{T}_{s-k+1,n}$ for $k=1,2,\ldots,s-1$, and $\bm q\in \mathbb{R}^n$. In this situation, we say that the polynomial $H$ is defined by $(\Delta,\bm q):=\left({\mathcal C}^{(1)},\ldots,{\mathcal C}^{(s-1)},\bm q\right)$. Most recently, Gowda \cite{G16} introduced an interesting model named by {\it polynomial complementarity problem} (PCP), which is a specialized NCP with $F$ being a polynomial defined by $(\Lambda, \bm a)$, where $\Lambda:=\left({\mathcal A}^{(1)},\ldots,{\mathcal A}^{(m-1)}\right)\in \mathcal{F}_{m,n}:=\mathbb{T}_{m,n}\times\cdots\times\mathbb{T}_{2,n}$ and ${\bm a}\in{\mathbb R}^n$. As mentioned in \cite{G16},  the PCP appears in polynomial optimization (where a real valued polynomial function is optimized over a constraint set defined by polynomials) and includes the well studied TCPs as its special case. By fully exploiting the polynomial nature of PCPs, some interesting specialized results, which seem better than the results obtained by applying the theory of NCPs to PCPs directly, have been developed, e.g., see \cite{G16,LHL17,YLH17}.

In this paper, we consider an extension of PCPs, namely, the {\it generalized polynomial complementarity problem} (GPCP), which is a special kind of GCPs with two polynomials $F(\bm x)$ and $G(\bm x)$. More specifically, both $F(\bm x)$ and $G(\bm x)$ in \eqref{GCP} are defined by $(\Lambda,{\bm a})$ and $(\Theta,{\bm b})$, respectively, where
$$
\left\{\begin{array}{ll}
\Lambda:=\left({\mathcal A}^{(1)},\ldots,{\mathcal A}^{(m-1)}\right)\in \mathcal{F}_{m,n}:=\mathbb{T}_{m,n}\times\cdots\times\mathbb{T}_{2,n}\;,&\;\; {\bm a}\in{\mathbb R}^n; \\
\Theta:=\left({\mathcal B}^{(1)},\ldots,{\mathcal B}^{(l-1)}\right)\in \mathcal{F}_{l,n}:=\mathbb{T}_{l,n}\times \cdots\times\mathbb{T}_{2,n}\;,&\;\; {\bm b}\in\mathbb{R}^n,
\end{array}\right.$$
that is,
\[\label{gpcp}F(\bm x):=\sum_{k=1}^{m-1}{\mathcal A}^{(k)}{\bm x}^{m-k}+{\bm a}\quad {\rm and}\quad G(\bm x):=\sum_{p=1}^{l-1}{\mathcal B}^{(p)}{\bm x}^{l-p}+{\bm b}.\]
In what follows, we use ${\rm GPCP}(\Lambda,\bm a,\Theta,\bm b,K)$ to represent the specialized \eqref{GCP} with $F(\bm x)$ and $G(\bm x)$ given in \eqref{gpcp}. Although the entire theory of GCPs is applicable to GPCPs, one may be interested in the specialized structure-exploiting results. In fact, the appearance of polynomials $F(\bm x)$ and $G(\bm x)$ makes the problem under consideration more challenging than the ones in PCPs and TCPs. Hence, by making use of the polynomial nature (structure of tensors) of \eqref{gpcp}, we have a threefold contribution on the problem under consideration. (i) To the best of our knowledge, this is the first work on GPCPs. Therefore, we give an affirmative answer showing that the solution set of ${\rm GPCP}(\Lambda,\bm a,\Theta,\bm b,K)$ is nonempty and compact when a pair of leading tensors is cone {\bf ER}. (ii) Under the condition that a pair of leading tensors is cone ${\bf R}_0$, we obtain some topological properties on the solution set of ${\rm GPCP}(\Lambda,\bm a,\Theta,\bm b,K)$. (iii) We prove a remarkable global Lipschitzian error bound of the solution set of ${\rm GPCP}(\Lambda,\bm a,\Theta,\bm b,K)$. Such a result has an important role in the study of unbounded asymptotics and provides valuable quantitative information about the iterates obtained at the termination of iterative algorithms for computing solutions of GPCPs. Moreover, it is noteworthy that our global error bound is better in the sense that the assumptions of our result are weaker than the ones assumed in the current PCPs and TCPs literature.

The rest of the paper is organized as follows. In Section \ref{SecPrelim}, we recall some definitions
and basic facts that will be used in the subsequent analysis. In Section \ref{SecNCS}, we obtain a sufficient condition relying on a cone ${\bf ER}$-tensor pair condition (see (\ref{equation1.2})) to guarantee the nonemptiness and compactness of the solution set of ${\rm GPCP}(\Lambda,\bm a,\Theta,\bm b,K)$. Note that such a condition reduces to the condition of the ${\bf ER}$-tensor in the case of TCPs. In Section \ref{Topprop}, we study some more topological properties of the solution set of ${\rm GPCP}(\Lambda,\bm a,\Theta,\bm b,K)$. In Section \ref{SecEA}, we present a notable global Lipschitzian error bound result for ${\rm GPCP}(\Lambda,\bm a,\Theta,\bm b, K)$ with $K:=\mathbb{R}_+^n$ under appropriate conditions. Finally, we complete this paper with drawing some concluding remarks in Section \ref{FinRem}.

\medskip
{\bf Notation}. Throughout this paper, for given positive integer $n\geq 2$, let $\mathbb{R}^n$ denote the real Euclidean space of column vectors of length $n$, and let $[n]$ be the index set $\{1, 2,\ldots, n\}$. The small letters $x,y,u,\ldots,$ and small bold letters ${\bm x}, {\bm y}, {\bm u},\ldots$ represent scalars and vectors, respectively. In particular, $\bm 0$ is the column vector in $\mathbb{R}^n$, whose all entries are zeros. For a given vector $\bm x=(x_1,x_2,\ldots,x_n)^\top\in \mathbb{R}^n$, $\|\bm x\|$ denotes the Euclidean norm of $\bm x$, and ${\bm x}_+$ denotes the orthogonal projection of $\bm x$ on $\mathbb{R}_+^n$, that is, $({\bm x}_+)_i:={\rm max}\{x_i,0\}$ for $i\in[n]$. For a given tensor ${\mathcal A}=(a_{j_1j_2\ldots j_m})\in{\mathbb T}_{m,n}$, the (squared) Frobenius norm of ${\mathcal A}$ is defined by
$$\|{\mathcal A}\|_{\rm F}:=\sqrt{\sum_{j_1,j_2,\ldots,j_m=1}^n a_{j_1j_2\ldots j_m}^2}.$$
In addition, denote by $\mathcal{I}=(\delta_{j_1j_2\cdots j_m})$ the unit tensor, where $\delta_{j_1j_2\cdots j_m}$ is the Kronecker symbol
$$
\delta_{j_1j_2\cdots j_m}=\left\{
\begin{array}{ll}
1,&\;\;{\rm if~}j_1=j_2\cdots =j_m,\\
0,&\;\;{\rm otherwise},
\end{array}
\right.
$$
In particular, we denote by $I$ the identity matrix when $m=2$. Given a tensor $\mathcal{A}=(a_{j_1j_2\ldots j_m})\in \mathbb{T}_{m,n}$ and a vector $\bm x\in \mathbb{R}^n$, we define $\mathcal{A}{\bm x}^m=\langle{\bm x}, \mathcal{A}\bm x^{m-1}\rangle$ being the value at $\bm x$ of a homogeneous polynomial.
If $F:\mathbb{R}^n\rightarrow\mathbb{R}^n$ is a vector valued function, its $j$-th component function is denoted by $F_j$.

\section{Preliminaries}\label{SecPrelim}
In this section, we introduce some basic definitions and properties that will be used in the sequels.

For a given closed convex cone $K$ in $\mathbb{R}^n$, it is well known from \cite{KS80} that the projection operator onto $K$, denoted by $P_K$, is well-defined for every $\bm x\in \mathbb{R}^n$. Moreover, we know that $P_K(\bm x)$ is the unique element in $K$ such that $\langle P_K(\bm x)-\bm x, P_K(\bm x)\rangle=0$ and $\langle P_K(\bm x)-\bm x, \bm y\rangle\geq 0$ for all $\bm y\in K$.

We now recall the concept of {\it exceptional family of elements} for a pair of functions with respect to a given closed convex cone, which plays an important role in the existence analysis of solutions of ${\rm NCPs}$ and GCPs, e.g., see \cite{IBK97,IC99,KI02}.

\begin{definition}
Let $F,G:\mathbb{R}^n\rightarrow \mathbb{R}^n$ be two given continuous functions. A set of points $\{{\bm x}^{(i)}\}\subset\mathbb{R}^n$ is called an exceptional family of elements for the pair $(F,G)$ with respect to the cone $K$, if the following conditions are satisfied:
\begin{itemize}
\itemindent 4pt
\item[{\rm (i)}] $\|{\bm x}^{(i)}\|\rightarrow\infty$ as $i\rightarrow\infty$;
\item[{\rm (ii)}] for every $i$, there exists a real number $\mu_i>0$ such that ${\bm s}^{(i)}=\mu_i{\bm x}^{(i)}+F({\bm x}^{(i)})\in K$, ${\bm v}^{(i)}=\mu_i{\bm x}^{(i)}+G({\bm x}^{(i)})\in K^*$, and $\langle{\bm s}^{(i)}, {\bm v}^{(i)}\rangle=0$.
\end{itemize}
\end{definition}

Using the notion mentioned above, we have the following result (see the proof in \cite{KI02}). Here, for the sake of completeness, we prove it again in a similar way used for proving \cite[Theorem 1]{IC99}.

\begin{lemma}\label{lemma3}
For two given continuous mappings $F,G:\mathbb{R}^n\rightarrow \mathbb{R}^n$ and a closed convex cone $K$ in $\mathbb{R}^n$, there exists either a solution of the problem ${\rm GCP}(F,G,K)$ or an exceptional family of elements for the pair $(F,G)$. \end{lemma}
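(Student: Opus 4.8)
The plan is to prove this dichotomy via a topological degree / homotopy argument applied to the natural projection-based reformulation of $\mathrm{GCP}(F,G,K)$. The key first step is to recast solvability as a fixed-point problem. The standard device is to observe that $\bm x$ solves $\mathrm{GCP}(F,G,K)$ if and only if a suitable projection equation holds; here, because the complementarity involves two functions $F$ and $G$ together with the cone $K$ and its dual $K^\ast$, I would work with a map built from $P_K$ and $P_{K^\ast}$. Concretely, using the variational characterization of $P_K$ recalled just before the lemma (namely $\langle P_K(\bm x)-\bm x, P_K(\bm x)\rangle=0$ and $\langle P_K(\bm x)-\bm x,\bm y\rangle\ge 0$ for all $\bm y\in K$), one shows that the complementarity conditions $F(\bm x)\in K$, $G(\bm x)\in K^\ast$, $\langle F(\bm x),G(\bm x)\rangle=0$ are equivalent to a single projection identity of the form $F(\bm x)=P_K\big(F(\bm x)-G(\bm x)\big)$, or an analogous equation adapted to the two-function setting. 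This reduces the problem to finding a zero of a continuous map $\Phi:\mathbb{R}^n\to\mathbb{R}^n$.

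Next I would set up a homotopy on closed balls and invoke degree theory (or, equivalently, the Brouwer fixed-point theorem applied to an augmented map). For each $r>0$, consider the restriction of $\Phi$ to the ball $\bar B_r:=\{\bm x:\|\bm x\|\le r\}$ together with a parametrized family interpolating between $\Phi$ and the identity, built around the perturbation $\mu\bm x$ appearing in condition (ii) of the definition of an exceptional family. The heart of the argument is the alternative coming from the homotopy invariance of degree: either the degree of $\Phi$ on some $\bar B_r$ relative to $\bm 0$ is nonzero, forcing a zero of $\Phi$ (hence a solution of $\mathrm{GCP}(F,G,K)$), or the homotopy fails to avoid $\bm 0$ on every boundary sphere, which means that for a sequence of radii $r_i\to\infty$ there exist boundary points $\bm x^{(i)}$ with $\|\bm x^{(i)}\|=r_i$ and associated parameters $\mu_i>0$ satisfying exactly the relations ${\bm s}^{(i)}=\mu_i{\bm x}^{(i)}+F({\bm x}^{(i)})\in K$, ${\bm v}^{(i)}=\mu_i{\bm x}^{(i)}+G({\bm x}^{(i)})\in K^\ast$, and $\langle{\bm s}^{(i)},{\bm v}^{(i)}\rangle=0$. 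This second branch is precisely an exceptional family of elements for $(F,G)$ with respect to $K$, since $\|{\bm x}^{(i)}\|=r_i\to\infty$ gives condition (i) and the boundary relations give condition (ii).

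The step I expect to be the main obstacle is verifying that the failure-of-degree branch yields boundary data matching the definition \emph{exactly}, including the positivity $\mu_i>0$ of the homotopy parameter and the three simultaneous membership/orthogonality conditions. The delicate point is choosing the homotopy and the reformulation map $\Phi$ so that the zeros on the boundary sphere, when they obstruct the degree, decode cleanly into the triple $({\bm s}^{(i)},{\bm v}^{(i)},\mu_i)$; getting the projection algebra to produce $\bm s^{(i)}\in K$ and $\bm v^{(i)}\in K^\ast$ simultaneously (rather than a single condition) requires carefully using the defining properties of $P_K$ together with the duality $K^{\ast\ast}=K$ for the closed convex cone $K$. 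Since the statement explicitly directs us to mirror the proof of \cite[Theorem 1]{IC99}, I would follow that template: define the auxiliary projection map, apply the degree-theoretic Leray--Schauder-type alternative on expanding balls, and then translate the non-solvable case into the exceptional family via the boundary analysis. Once the reformulation is fixed correctly, the remaining verifications are routine continuity and compactness arguments on each finite ball.
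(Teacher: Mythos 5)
Your proposal follows essentially the same route as the paper's proof: the reformulation $\Phi(\bm x)=F(\bm x)-P_K\bigl(F(\bm x)-G(\bm x)\bigr)$, a homotopy $t\bm x+(1-t)\Phi(\bm x)$ on expanding balls, the degree-theoretic (Poincar\'{e}--B\"{o}hl) alternative, and the decoding of boundary zeros with $\mu=t/(1-t)>0$ into the exceptional family via the variational properties of $P_K$. The delicate step you flagged is handled in the paper exactly as you anticipate, with the membership $\mu_\epsilon\bm x^{(\epsilon)}+G(\bm x^{(\epsilon)})\in K^*$ read off directly from the inequality $\langle P_K(\bm z)-\bm z,\bm y\rangle\geq 0$ for all $\bm y\in K$, so no separate use of $P_{K^*}$ or of $K^{**}=K$ is needed.
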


\begin{proof}
Consider the function defined by
$$
\Phi(\bm x):=F(\bm x)-P_K(F(\bm x)-G(\bm x))
$$
for all $\bm x\in\mathbb{R}^n$. It is easy to show that the problem ${\rm GCP}(F,G,K)$ has a solution if and only if the equation $\Phi(\bm x)=\bm 0$ is solvable. For any given $\epsilon>0$, denote
$$
\mathbb{S}_\epsilon:=\{\bm x\in\mathbb{R}^n~|~\|\bm x\|=\epsilon\}\quad{\rm and}\quad\mathbb{B}_\epsilon:=\{\bm x\in\mathbb{R}^n~|~\|\bm x\|<\epsilon\}.
$$
 Consider the homotopy defined by
\begin{equation}\label{norm}
H(\bm x,t):=t{\bm x}+(1-t)\Phi(\bm x),~~~~0\leq t\leq1.
\end{equation}
From the definition of $\Phi$, we have
\begin{equation}\label{bnorm}
H(\bm x,t)=t{\bm x}+(1-t)F(\bm x)-(1-t)P_K(F(\bm x)-G(\bm x)), ~~0\leq t\leq1.
\end{equation}
We apply the topological degree theory and the Poincar\'{e}-B\"{o}hl Theorem in \cite{L78} for $\bm y=\bm 0$
and $\mathbb{B}_\epsilon ~(\partial\mathbb{B}_\epsilon=\mathbb{S}_\epsilon)$. We have the following two situations:

{\bf Case} (i). There exists $\epsilon>0$ such that $H(\bm x,t)\neq\bm 0$ for any $\bm x\in\mathbb{S}_\epsilon$ and $t\in[0,1]$. In this case, by Poincar\'{e}-B\"{o}hl Theorem, we have that ${\rm deg}(\Phi,\mathbb{B}_\epsilon,\bm 0)={\rm deg}(I,\mathbb{B}_\epsilon,\bm 0)$. Since ${\rm deg}(I,\mathbb{B}_\epsilon,\bm 0)=1$, we obtain ${\rm deg}(\Phi,\mathbb{B}_\epsilon,\bm 0)=1$, which means that equation $\Phi(\bm x)=\bm 0$ has a solution in $\mathbb{B}_\epsilon$, and hence the problem ${\rm GCP}(F,G,K)$ has a solution.

{\bf Case} (ii). For any $\epsilon>0$ there exist $\bm x^{(\epsilon)}\in\mathbb{S}_\epsilon$ and $t_\epsilon\in[0,1]$ such that
\begin{equation}\label{cnorm}
H(\bm x^{(\epsilon)},t_\epsilon)=\bm 0.
\end{equation}
We first claim that $t_\epsilon$ must be different from $1$. Indeed, if $t_\epsilon=1$ for some $\epsilon$, then by (\ref{norm}) and (\ref{cnorm}), we deduce that $\bm x^{(\epsilon)}=\bm 0$, which is impossible since $\bm x^{(\epsilon)}\in\mathbb{S}_\epsilon$. Secondly, if $t_\epsilon=0$ for some $\epsilon$, then (\ref{cnorm}) becomes $\Phi(\bm x^{(\epsilon)})=\bm 0$, which means that the problem ${\rm GCP}(F,G,K)$ has a solution.

Hence, we can say that either the problem ${\rm GCP}(F,G,K)$ has a solution, or for any $\epsilon>0$ there exist $\bm x^{(\epsilon)}\in\mathbb{S}_\epsilon$
and $t_\epsilon\in(0,1)$ such that $H(\bm x^{(\epsilon)},t_\epsilon)=\bm 0$. If ${\rm GCP}(F,G,K)$ has no solution, then from (\ref{bnorm}), we have
\begin{equation}\label{cnorm1}
\frac{t_\epsilon}{1-t_\epsilon}{\bm x^{(\epsilon)}}+F(\bm x^{(\epsilon)})=P_K(F(\bm x^{(\epsilon)})-G(\bm x^{(\epsilon)})).
\end{equation}
Consequently, by the basic properties of $P_K$ mentioned at the beginning of this section, it holds that
\begin{equation}\label{dnorm}
\left\langle\frac{t_\epsilon}{1-t_\epsilon}{\bm x^{(\epsilon)}}+F(\bm x^{(\epsilon)})-(F(\bm x^{(\epsilon)})-G(\bm x^{(\epsilon)})), {\bm y}\right\rangle\geq 0, {\rm~~~~~for~all~} {\bm y}\in K
\end{equation}
and
\begin{equation}\label{enorm}
\left\langle\frac{t_\epsilon}{1-t_\epsilon}{\bm x^{(\epsilon)}}+F(\bm x^{(\epsilon)})-(F(\bm x^{(\epsilon)})-G(\bm x^{(\epsilon)})), \frac{t_\epsilon}{1-t_\epsilon}{\bm x^{(\epsilon)}}+F(\bm x^{(\epsilon)})\right\rangle=0.
\end{equation}
By putting $\mu_\epsilon=t_\epsilon/(1-t_\epsilon)$ in (\ref{dnorm}) and (\ref{enorm}), we deduce
\begin{equation}\label{fnorm}
\langle\mu_\epsilon{\bm x^{(\epsilon)}}+G(\bm x^{(\epsilon)}), {\bm y}\rangle\geq 0, {\rm~~~~~for~all~} {\bm y}\in K
\end{equation}
and
\begin{equation}\label{gnorm}
\langle\mu_\epsilon{\bm x^{(\epsilon)}}+G(\bm x^{(\epsilon)}), \mu_\epsilon{\bm x^{(\epsilon)}}+F(\bm x^{(\epsilon)})\rangle=0.
\end{equation}
From (\ref{fnorm}), we know that $\mu_\epsilon{\bm x^{(\epsilon)}}+G(\bm x^{(\epsilon)})\in K^*$, which, together with (\ref{cnorm1}), (\ref{gnorm}) and the fact that $\|{\bm x}^{(\epsilon)}\|=\epsilon$ for any $\epsilon>0$, implies that $\{{\bm x}^{(\epsilon)}\}$ is an exceptional family of elements for the pair $(F,G)$.
\end{proof}

\begin{remark}\label{Rem1}
For Lemma \ref{lemma3}, if $S(K)\subset K$ holds, where $S(\bm x):=\bm x-F(\bm x)$ for any $\bm x\in \mathbb{R}^n$, then we know that the problem ${\rm GCP}(F,G,K)$ has either a solution in $K$, or an exceptional family of elements $\{\bm x^{(i)}\}$ in $K$ for $(F,G)$, see \cite{KI02}.
\end{remark}

Motivated by the concept of {\it exceptionally regular tensor} ({\bf ER}-tensor for short) introduced in \cite{WHB16}, we now define a new class of structured tensor pair that will play a key role in analyzing properties of ${\rm GPCP}(\Lambda,\bm a,\Theta,\bm b,K)$ .
\begin{definition}\label{ERdef}
 Let $K$ be a closed convex cone in $\mathbb{R}^n$ and $({\mathcal A},{\mathcal B})\in {\mathbb T}_{m,n}\times{\mathbb T}_{l,n}$. We say that $({\mathcal A},{\mathcal B})$ is
\begin{itemize}
\item[(i)] an ${\bf ER}^K$-tensor pair, if there exists no $({\bm x},v,t)\in(\mathbb{R}^n\backslash\{\bm
 0\})\times\mathbb{R}_+\times\mathbb{R}_+$ such that
\begin{equation}\label{equation1.2}
\left\{
\begin{array}{ll}
{\mathcal A}{\bm x}^{m-1}+v{\bm x}\in K,\\
{\mathcal B}{\bm x}^{l-1}+t{\bm x}\in K^*,\\
\left\langle {{\mathcal A}{\bm x}^{m-1}+v{\bm x},\mathcal B}{\bm x}^{l-1}+t{\bm x}\right\rangle=0.
\end{array}
\right.
\end{equation}
\item[(ii)] an ${\bf R}^{K}_0$-tensor pair, if there exists no ${\bm x}\in{\mathbb R}^n\backslash\{\bm 0\}$ such that
\begin{equation}\label{equation1.3}
{\mathcal A}{\bm x}^{m-1}\in K,\quad
{\mathcal B}{\bm x}^{l-1}\in K^*,\quad{\rm and}\quad
\left\langle{\mathcal A}{\bm x}^{m-1},{\mathcal B}{\bm x}^{l-1}\right\rangle=0.
\end{equation}
\end{itemize}
In particular, when $K:=\mathbb{R}_+^n$, we say simply that ${\bf ER}^K$-tensor pair and ${\bf R}^{K}_0$-tensor pair are ${\bf ER}$-tensor pair and ${\bf R}_0$-tensor pair, respectively.
\end{definition}

From the definitions above, we can see that an ${\bf ER}^K$-tensor pair must be an ${\bf R}^{K}_0$-tensor pair. If $K=\mathbb{R}_+^n$ and  $\mathcal{B}$ is an $n\times n$ identity matrix, then the above concept of ${\bf ER}$-tensor pair reduces to the concept of ${\bf ER}$-tensor introduced in \cite{WHB16}. Furthermore, it is easy to see that, if $(\mathcal{A},\mathcal{I})\in \mathbb{T}_{m,n}\times \mathbb{T}_{l,n}$ is an ${\bf ER}$-tensor pair, then $\mathcal{A}$ is an ${\bf ER}$-tensor; conversely, if $\mathcal{A}$ is an ${\bf ER}$-tensor and $l$ is even, then $(\mathcal{A},\mathcal{I})$ must be an ${\bf ER}$-tensor pair. In \cite{WHB16}, it has been proved that the class of strictly semi-positive tensors \cite{SQ14} is a subset of the class of ${\bf ER}$-tensors, and the class of weak {\bf P}-tensors \cite{WHB16} is also a proper subset of the class of ${\bf ER}$-tensors.

\begin{exam}\label{exam2-1}
Let $K=\mathbb{R}_+^2$ and ${\mathcal A}=(a_{i_1i_2i_3i_4})\in\mathbb{T}_{4,2}$ with $a_{1111}=1$, $a_{2111}=-1$, $a_{2222}=1$ and elsewhere zeros. Let ${\mathcal B}=(b_{i_1i_2i_3i_4})\in\mathbb{T}_{4,2}$ with $b_{1111}=1$, $b_{2122}=-1$, $b_{2222}=1$ and elsewhere zeros.
In this case, \eqref{equation1.2} is specified as
\begin{equation}\label{IC1}
\bm 0\leq\left(
\begin{array}{c}
x_1^3+vx_1\\
x_2^3-x_1^3+vx_2
\end{array}
\right)\perp\left(
\begin{array}{c}
x_1^3+tx_1\\
x_2^3-x_1x_2^2+tx_2
\end{array}
\right)\geq\bm 0,
\end{equation}
where ${\bm x}\in\mathbb{R}^n$ and $v,t\in\mathbb{R}_+$. For any $v,t\in\mathbb{R}_+$, from the first component in \eqref{IC1}, we have $x_1^2(x_1^2+v)(x_1^2+t)=0$, which implies that $x_1=0$. Consequently, from the second component in \eqref{IC1}, we have $x_2^2(x_2^2+v)(x_2^2+t)=0$, which implies that $x_2=0$. From the discussion above, we know that there exists no $({\bm x},v,t)\in(\mathbb{R}^2\backslash\{\bm
 0\})\times\mathbb{R}_+\times\mathbb{R}_+$ such that (\ref{IC1}) holds. Hence, $({\mathcal A},{\mathcal B})$ is an ${\bf ER}$-tensor pair.
\end{exam}

\section{Nonemptiness and compactness of the solution set}\label{SecNCS}

In this section, we mainly study the nonemptiness and compactness of the solution set of ${\rm GPCP}(\Lambda,\bm a,\Theta,\bm b,K)$ with structured tensors. Throughout, we denote by ${\rm SOL}(\Lambda,{\bm a},\Theta,{\bm b},K)$ the solution set of ${\rm GPCP}(\Lambda,\bm a,\Theta,\bm b,K)$ for notational convenience.

\begin{theorem}\label{theorem1.6}
 Let $\Lambda=\left({\mathcal A}^{(1)},\ldots,{\mathcal A}^{(m-1)}\right)\in \mathcal{F}_{m,n}$, $\Theta=\left({\mathcal B}^{(1)},\ldots,{\mathcal B}^{(l-1)}\right)\in \mathcal{F}_{l,n}$, and $K\subset\mathbb{R}^n$ a closed convex cone. Suppose that $({\mathcal A}^{(1)},{\mathcal B}^{(1)})$ is an ${\bf ER}^K$-tensor pair. If either (i) $m=l$ or (ii) $m\neq l$, but the one of $\mathcal{A}^{(1)}$ and $\mathcal{B}^{(1)}$, whose order is even and is larger than another, is positive definite (see \cite{Qi2005}), then for any given vectors $\bm a, \bm b\in \mathbb{R}^n$, the set ${\rm SOL}(\Lambda,\bm a,\Theta,\bm b, K)$ is nonempty and compact.
\end{theorem}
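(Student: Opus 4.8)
The plan is to combine Lemma~\ref{lemma3} with a normalization-and-scaling argument that turns any hypothetical exceptional family into a forbidden solution of the $\mathbf{ER}^K$ system \eqref{equation1.2}, while boundedness of the solution set is handled separately through the weaker $\mathbf{R}^K_0$ system \eqref{equation1.3}. By Lemma~\ref{lemma3}, either ${\rm GPCP}(\Lambda,\bm a,\Theta,\bm b,K)$ has a solution --- in which case nonemptiness is immediate --- or there is an exceptional family $\{\bm x^{(i)}\}$ for $(F,G)$ with multipliers $\mu_i>0$; I would assume the latter and seek a contradiction. Writing $r_i=\|\bm x^{(i)}\|\to\infty$ and $\bm y^{(i)}=\bm x^{(i)}/r_i$, and passing to a subsequence, one gets $\bm y^{(i)}\to\bar{\bm y}$ with $\|\bar{\bm y}\|=1$. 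Dividing $F(\bm x^{(i)})$ by $r_i^{m-1}$ and $G(\bm x^{(i)})$ by $r_i^{l-1}$ kills all lower-order tensor terms and the constants $\bm a,\bm b$, so $F(\bm x^{(i)})/r_i^{m-1}\to\mathcal{A}^{(1)}\bar{\bm y}^{m-1}$ and $G(\bm x^{(i)})/r_i^{l-1}\to\mathcal{B}^{(1)}\bar{\bm y}^{l-1}$. Rescaling the cone memberships $\bm s^{(i)}=\mu_i\bm x^{(i)}+F(\bm x^{(i)})\in K$ and $\bm v^{(i)}=\mu_i\bm x^{(i)}+G(\bm x^{(i)})\in K^*$ by these same powers introduces the scalars $v_i=\mu_i/r_i^{m-2}$ and $t_i=\mu_i/r_i^{l-2}$ in front of $\bm y^{(i)}$. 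The whole point is to keep these scalars bounded (or negligible), so that the limits of the rescaled inclusions and of $\langle\bm s^{(i)},\bm v^{(i)}\rangle=0$ produce a triple $(\bar{\bm y},\bar v,\bar t)\in(\mathbb{R}^n\backslash\{\bm 0\})\times\mathbb{R}_+\times\mathbb{R}_+$ solving \eqref{equation1.2}, contradicting the $\mathbf{ER}^K$ hypothesis.

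Controlling these multipliers is the crux, and it splits along the two cases. When $m=l$ we have $v_i=t_i=:\lambda_i$, so a single scalar must be controlled: if $\lambda_i$ is bounded, a subsequential limit $\bar\lambda\ge 0$ gives exactly a solution of \eqref{equation1.2} with $\bar v=\bar t=\bar\lambda$; if $\lambda_i\to\infty$, dividing both rescaled inclusions additionally by $\lambda_i$ forces $\bar{\bm y}\in K\cap K^*$, and the orthogonality, divided by $\lambda_i^2 r_i^{m+l-2}$, forces $\langle\bar{\bm y},\bar{\bm y}\rangle=0$, i.e. $\bar{\bm y}=\bm 0$, contradicting $\|\bar{\bm y}\|=1$. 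When $m\neq l$, say $m>l$ with $\mathcal{A}^{(1)}$ of even order $m$ positive definite, the two scalars live at genuinely different scales and cannot both be tamed by normalization alone; here I would read the orthogonality as a quadratic in $\mu_i$, namely $\mu_i^2 r_i^2+\mu_i\langle\bm x^{(i)},F(\bm x^{(i)})+G(\bm x^{(i)})\rangle+\langle F(\bm x^{(i)}),G(\bm x^{(i)})\rangle=0$. Positive definiteness yields $\langle\bm x^{(i)},F(\bm x^{(i)})\rangle=\mathcal{A}^{(1)}(\bm x^{(i)})^{m}+O(r_i^{m-1})\ge c\,r_i^m$ for some $c>0$, which dominates $\langle\bm x^{(i)},G(\bm x^{(i)})\rangle=O(r_i^{l})$, while $|\langle F(\bm x^{(i)}),G(\bm x^{(i)})\rangle|\lesssim r_i^{m+l-2}$. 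Thus the coefficient of $\mu_i$ is positive of order $r_i^m$, which forces $\mu_i\lesssim r_i^{l-2}$; consequently $t_i=\mu_i/r_i^{l-2}$ stays bounded and $v_i=\mu_i/r_i^{m-2}\to 0$. Passing to the limit then gives a solution of \eqref{equation1.2} with $\bar v=0$, $\bar t\ge 0$, and $\bar{\bm y}\neq\bm 0$ --- the contradiction. The case $l>m$ with $\mathcal{B}^{(1)}$ positive definite is symmetric upon exchanging the roles of $(F,K)$ and $(G,K^*)$. This disposes of all cases, so no exceptional family exists and ${\rm SOL}(\Lambda,\bm a,\Theta,\bm b,K)$ is nonempty.

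For compactness, closedness of ${\rm SOL}(\Lambda,\bm a,\Theta,\bm b,K)$ is immediate from continuity of $F,G$ and closedness of $K,K^*$. Boundedness I would prove by contradiction: an unbounded sequence of solutions $\{\bm z^{(i)}\}$, normalized to $\bm w^{(i)}=\bm z^{(i)}/\|\bm z^{(i)}\|\to\bar{\bm w}$ with $\|\bar{\bm w}\|=1$, yields after dividing $F(\bm z^{(i)})\in K$ by $\|\bm z^{(i)}\|^{m-1}$ and $G(\bm z^{(i)})\in K^*$ by $\|\bm z^{(i)}\|^{l-1}$ the limiting relations $\mathcal{A}^{(1)}\bar{\bm w}^{m-1}\in K$, $\mathcal{B}^{(1)}\bar{\bm w}^{l-1}\in K^*$, and $\langle\mathcal{A}^{(1)}\bar{\bm w}^{m-1},\mathcal{B}^{(1)}\bar{\bm w}^{l-1}\rangle=0$. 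This is precisely \eqref{equation1.3}, which has no nonzero solution because every $\mathbf{ER}^K$-tensor pair is an $\mathbf{R}^K_0$-tensor pair; the contradiction gives boundedness, hence compactness. It is worth noting that this step needs neither the case split nor positive definiteness. The main obstacle throughout is the multiplier control in the nonemptiness argument, specifically taming $\mu_i$ when $m\neq l$ --- this is exactly where positive definiteness of the higher-order leading tensor is indispensable, since the mismatched scales otherwise prevent both $v_i$ and $t_i$ from having finite limits simultaneously.
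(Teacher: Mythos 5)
Your proposal is correct and follows essentially the same route as the paper: invoke Lemma \ref{lemma3}, normalize the hypothetical exceptional family, control the multipliers (using positive definiteness of the higher-order leading tensor when $m\neq l$) so that passing to the limit produces a nonzero solution of \eqref{equation1.2} contradicting the ${\bf ER}^K$ condition, and then obtain compactness exactly as the paper does, via the implied ${\bf R}^{K}_0$ property. The only cosmetic difference lies in how multiplier boundedness is established --- you bound $\mu_i\lesssim r_i^{l-2}$ directly from the quadratic identity (and dispose of the unbounded case when $m=l$ via $\bar{\bm y}\in K\cap K^*$ and $\|\bar{\bm y}\|^2=0$), whereas the paper assumes $t_i\to\infty$ and derives a divergence (or $0=1$) contradiction --- but the underlying estimates are the same.
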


\begin{proof}
We first prove the nonemptiness of the solution set ${\rm SOL}(\Lambda,\bm a,\Theta,\bm b,K)$. Suppose, on the contrary, that ${\rm SOL}(\Lambda,\bm a,\Theta,\bm b,K)=\emptyset$. Then, it follows from Lemma \ref{lemma3} that there exists an exceptional family of elements for the pair $(F,G)$, i.e., there exists a sequence $\{{\bm x}^{(i)}\}_{i=1}^\infty\subset\mathbb{R}^n$ satisfying $\|{\bm x}^{(i)}\|\rightarrow\infty$ as $i\rightarrow\infty$ and, there exists a scalar $\mu_i>0$ for each $i$ such that
\begin{equation}\label{equation1.4}
\left\{
\begin{array}{lll}
\displaystyle\sum_{k=1}^{m-1}{\mathcal A}^{(k)}({\bm x}^{(i)})^{m-k}+\bm a+\mu_i{\bm x}^{(i)}\in K,\\
\displaystyle\sum_{p=1}^{l-1}{\mathcal B}^{(p)}({\bm x}^{(i)})^{l-p}+{\bm b}+\mu_i{\bm x}^{(i)}\in K^*,\\
\displaystyle h_i:=\left\langle\sum_{k=1}^{m-1}{\mathcal A}^{(k)}({\bm x}^{(i)})^{m-k}+\bm a+\mu_i{\bm x}^{(i)},\sum_{p=1}^{l-1}{\mathcal B}^{(p)}({\bm x}^{(i)})^{l-p}+{\bm b}+\mu_i{\bm x}^{(i)}\right\rangle=0.
\end{array}
\right.
\end{equation}
From the third expression in (\ref {equation1.4}), it is clear that ${h_i}/{\|\bm x^{(i)}\|^{m+l-2}}=0$ for every $i$, which shows
\begin{align}\label{Equat1}
0=&\frac{h_i}{\|{\bm x}^{(i)}\|^{m+l-2}}\nn \\
=&\frac{\left\langle\sum_{k=1}^{m-1}{\mathcal A}^{(k)}({\bm x}^{(i)})^{m-k}+\bm a,\sum_{p=1}^{l-1}{\mathcal B}^{(p)}({\bm x}^{(i)})^{l-p}+{\bm b}\right\rangle}{\|{\bm x}^{(i)}\|^{m+l-2}}\nn \\
&+\frac{t_i\left\langle{\bm x}^{(i)},\sum_{k=1}^{m-1}{\mathcal A}^{(k)}({\bm x}^{(i)})^{m-k}+\sum_{p=1}^{l-1}{\mathcal B}^{(p)}({\bm x}^{(i)})^{l-p}+{\bm a}+{\bm b}\right\rangle}{\|{\bm x}^{(i)}\|^m}+\frac{t_i^2}{\|{\bm x}^{(i)}\|^{m-l}},
\end{align}
where $t_i=\mu_i/\|{\bm x}^{(i)}\|^{l-2}$. We claim that $\{t_i\}_{i=1}^{\infty}$ is bounded. Suppose that $\{t_i\}_{i=1}^{\infty}$ is unbounded. Without loss of generality, we assume that $t_i\rightarrow +\infty$ as $i\rightarrow \infty$.

{\bf Case} (i). When $m=l$, by (\ref{Equat1}), it holds that
\begin{align}\label{Equat11}
0=&\frac{\left\langle\sum_{p=1}^{m-1}{\mathcal A}^{(p)}({\bm x}^{(i)})^{m-p}+\bm a,\sum_{p=1}^{m-1}{\mathcal B}^{(p)}({\bm x}^{(i)})^{m-p}+{\bm b}\right\rangle}{t_i^2\|{\bm x}^{(i)}\|^{2(m-1)}} \nn \\
&+\frac{\left\langle{\bm x}^{(i)}, \sum_{p=1}^{m-1}({\mathcal A}^{(p)}+{\mathcal B}^{(p)})({\bm x}^{(i)})^{m-p}+{\bm a}+{\bm b}\right\rangle}{t_i\|{\bm x}^{(i)}\|^m}+1.
\end{align}
Let $\bar{\bm x}^{(i)}={\bm x}^{(i)}/\|{\bm x}^{(i)}\|$. Since $\|\bar{\bm x}^{(i)}\|=1$ for any $i$, without loss of generality, we assume that $\bar{\bm x}^{(i)}\rightarrow\bar{\bm x}$ as $i\rightarrow \infty$. Consequently, it is easy to see that
\begin{align*}
\lim_{i\rightarrow \infty}\frac{\left\langle\sum_{p=1}^{m-1}{\mathcal A}^{(p)}({\bm x}^{(i)})^{m-p}+\bm a,\sum_{p=1}^{m-1}{\mathcal B}^{(p)}({\bm x}^{(i)})^{m-p}+{\bm b}\right\rangle}{\|{\bm x}^{(i)}\|^{2(m-1)}}=\left\langle {\mathcal A}^{(1)}\bar{\bm x}^{m-1},{\mathcal B}^{(1)}\bar{\bm x}^{m-1}\right\rangle
\end{align*}
and
$$\lim_{i\rightarrow \infty}\displaystyle\frac{\left\langle{\bm x}^{(i)}, \sum_{p=1}^{m-1}({\mathcal A}^{(p)}+{\mathcal B}^{(p)})({\bm x}^{(i)})^{m-p}+{\bm a}+{\bm b}\right\rangle}{\|{\bm x}^{(i)}\|^m}=({\mathcal A}^{(1)}+\mathcal{B}^{(1)})\bar{\bm x}^m,$$
which, together with (\ref{Equat11}), implies that $0=1$. It is a contradiction.

{\bf Case} (ii). When $m\neq l$, without loss of generality, we assume that $m>l$. Since ${\mathcal A}^{(1)}$ is positive definite and $\bar {\bm x}\in \mathbb{R}^n\backslash \{\bm 0\}$, it holds that
$$\lim_{i\rightarrow\infty}\frac{\left\langle{\bm x}^{(i)},\sum_{k=1}^{m-1}{\mathcal A}^{(k)}({\bm x}^{(i)})^{m-k}+\sum_{p=1}^{l-1}{\mathcal B}^{(p)}({\bm x}^{(i)})^{l-p}+{\bm a}+{\bm b}\right\rangle}{\|{\bm x}^{(i)}\|^m}={\mathcal A}^{(1)}\bar{\bm x}^m>0.$$
Consequently, from (\ref{Equat1}),  we obtain
\begin{align}\label{Equat2}
0\geq &\frac{\left\langle \sum_{k=1}^{m-1}{\mathcal A}^{(k)}({\bm x}^{(i)})^{m-k}+\bm a,\sum_{p=1}^{l-1}{\mathcal B}^{(p)}({\bm x}^{(i)})^{l-p}+{\bm b}\right\rangle}{\|{\bm x}^{(i)}\|^{m+l-2}} \nn\\
&+\frac{t_i\left\langle{\bm x}^{(i)},\sum_{k=1}^{m-1}{\mathcal A}^{(k)}({\bm x}^{(i)})^{m-k}+\sum_{p=1}^{l-1}{\mathcal B}^{(p)}({\bm x}^{(i)})^{l-p}+{\bm a}+{\bm b}\right\rangle}{\|{\bm x}^{(i)}\|^m}\nn \\
&\rightarrow+\infty,
\end{align}
which is also a contradiction.

Since $\{t_i\}_{i=1}^{\infty}$ is bounded, without loss of generality, we assume $\lim_{i\rightarrow\infty}t_i=\bar t$. It is clear that $\bar t\geq 0$. By the first two expressions in (\ref{equation1.4}), we know that, if $m=l$, then $${\mathcal A}^{(1)}\bar{\bm x}^{m-1}+\bar{t}\bar{\bm x}\in K,~{\mathcal B}^{(1)}\bar{\bm x}^{l-1}+\bar{t}\bar{\bm x}\in K^*,~\left\langle {\mathcal A}^{(1)}{\bar{\bm x}}^{m-1}+\bar{t}\bar{\bm x},{\mathcal B}^{(1)}\bar{\bm x}^{l-1}+\bar{t}\bar{\bm x}\right\rangle=0,$$
and if $m\neq l$, then
$$
\left\{
\begin{array}{ll}
{\mathcal A}^{(1)}\bar{\bm x}^{m-1}\in K,~{\mathcal B}^{(1)}\bar{\bm x}^{l-1}+\bar{t}\bar{\bm x}\in K^*,~\left\langle {\mathcal A}^{(1)}{\bar{\bm x}}^{m-1},{\mathcal B}^{(1)}\bar{\bm x}^{l-1}+\bar{t}\bar{\bm x}\right\rangle=0,&{\rm if~} m>l;\\
{\mathcal A}^{(1)}\bar{\bm x}^{m-1}+\bar t\bar {\bm x}\in K,~{\mathcal B}^{(1)}\bar{\bm x}^{l-1}\in K^*,~\left\langle {\mathcal A}^{(1)}{\bar{\bm x}}^{m-1}+\bar{t}\bar{\bm x},{\mathcal B}^{(1)}\bar{\bm x}^{l-1}\right\rangle=0,&{\rm otherwise}.
\end{array}\right.$$

From the discussion above, we know that there exists $(\hat{\bm x},\hat{v},\hat{t})\in(\mathbb{R}^n\backslash\{\bm
0\})\times\mathbb{R}_+\times\mathbb{R}_+$ satisfying the system (\ref{equation1.2}), which contradicts the condition that $({\mathcal A}^{(1)},{\mathcal B}^{(1)})$ is an ${\bf ER}^K$-tensor pair. Thus, ${\rm SOL}(\Lambda,\bm a,\Theta, \bm b,K)$ is nonempty.

Secondly, we prove that ${\rm SOL}(\Lambda,\bm a,\Theta,\bm b,K)$ is compact. Since $K$ and $K^*$ are closed, it is easy to see that ${\rm SOL}(\Lambda,\bm a,\Theta,\bm b,K)$ is closed. We now prove the boundedness of ${\rm SOL}(\Lambda,\bm a,\Theta,\bm b,K)$. Suppose that ${\rm SOL}(\Lambda,\bm a,\Theta,\bm b,K)$ is unbounded. Then there exists a sequence $\{{\bm x}^{(i)}\}^{\infty}_{i=1}\subseteq {\rm SOL}(\Lambda,\bm a,\Theta,\bm b,K)$ such that $\|{\bm x}^{(i)}\|\rightarrow\infty$ as $i\rightarrow\infty$. Without loss of generality, we assume that ${\bm x}^{(i)}/\|{\bm x}^{(i)}\|\rightarrow\bar{\bm x}$ as $i\rightarrow\infty$. It is clear that $\bar {\bm x}\neq \bm 0$. Furthermore, it follows from the definition of ${\rm GPCP}(\Lambda,\bm a,\Theta,\bm b,K)$ that
$$
\left\{
\begin{array}{l}
\displaystyle\frac{\bm a}{\|{\bm x}^{(i)}\|^{m-1}}+\sum_{k=1}^{m-1}\frac{1}{\|{\bm x}^{(i)}\|^{k-1}}{\mathcal A}^{(k)}({\bm x}^{(i)}/\|{\bm x}^{(i)}\|)^{m-k}\in K,\\
\displaystyle\frac{\bm b}{\|{\bm x}^{(i)}\|^{l-1}}+\sum_{p=1}^{l-1}\frac{1}{\|{\bm x}^{(i)}\|^{p-1}}{\mathcal B}^{(p)}({\bm x}^{(i)}/\|{\bm x}^{(i)}\|)^{l-p}\in K^*
\end{array}
\right.
$$
and
$$\displaystyle\frac{\left\langle\sum_{k=1}^{m-1}{\mathcal A}^{(k)}({\bm x}^{(i)})^{m-k}+\bm a,\sum_{p=1}^{l-1}{\mathcal B}^{(p)}({\bm x}^{(i)})^{l-p}+{\bm b}\right\rangle}{\|{\bm x}^{(i)}\|^{m+l-2}}=\frac{0}{\|{\bm x}^{(i)}\|^{m+l-2}}=0.
$$
Consequently, by letting $i\rightarrow\infty$, we have
\begin{equation}\label{BBYT}
{\mathcal A}^{(1)}\bar{\bm x}^{m-1}\in K,~~{\mathcal B}^{(1)}\bar{\bm x}^{l-1}\in K^*,~~{\rm and}~~\left\langle{\mathcal A}^{(1)}\bar{\bm x}^{m-1},{\mathcal B}^{(1)}\bar{\bm x}^{l-1}\right\rangle=0.
\end{equation}
This means that $\bar{\bm x}\in\mathbb{R}^n\backslash\{\bm 0\}$ satisfies the system (\ref{equation1.3}), which further contradicts the fact that $({\mathcal A}^{(1)},{\mathcal B}^{(1)})$ is an ${\bf R}^{K}_0$-tensor pair, since $({\mathcal A}^{(1)},{\mathcal B}^{(1)})$ is an ${\bf ER}^K$-tensor pair.  Thus ${\rm SOL}(\Lambda,\bm a,\Theta,\bm b,K)$ is compact.
\end{proof}

 \begin{remark}\label{Rem2}
 When $m>l$, if $S(K)\subset K$ where $S(\bm x):=\bm x-F(\bm x)$ for any $\bm x\in \mathbb{R}^n$, then by Remark \ref{Rem1} and a similar way to that used in Theorem \ref{theorem1.6}, we can obtain that ${\rm SOL}(\Lambda,\bm a,\Theta,\bm b,K)$ is nonempty and compact, provided $\mathcal{A}^{(1)}$ is strictly $K$-positive, i.e., $\mathcal{A}^{(1)}\bm x^{m}>0$ for any $\bm x\in K\backslash\{\bm 0\}$. Notice that, in this case, $m$ is not necessarily even.
  \end{remark}

\section{Some basic topological properties of the solution set}\label{Topprop}
 In this section, we further study some topological properties of ${\rm SOL}(\Lambda,\bm a,\Theta,\bm b,K)$. It is obvious that $\mathcal{F}_{m,n}$ is a linear space for any given positive integers $m$ and $n$. The distance between two elements $\Lambda_i=(\mathcal{A}^{(1)}_i,\mathcal{A}^{(2)}_i,\cdots,\mathcal{A}^{(m-1)}_i)\in\mathcal{F}_{m,n}~(i=1,2)$ is measured by means of
the expression
$$
\|\Lambda_1-\Lambda_2\|_F=\sqrt{\sum_{k=1}^{m-1}\left\|\mathcal{A}_1^{(k)}-\mathcal{A}_2^{(k)}\right\|_F^2}.
$$

Denote by $\C(\mathbb{R}^n)$ the set of nonzero closed convex cones in $\mathbb{R}^n$, which is associated with the natural metric defined by
$$
\delta(K_1,K_2) := \sup_{\|\bm z\|\leq 1}|{\rm dist}({\bm z},K_1)- {\rm dist}({\bm z},K_2)|,
$$
where ${\rm dist}({\bm z},K) := {\rm inf}_{{\bm u}\in K}\|{\bm z}-{\bm u}\|$ stands for the distance from $\bm z$ to $K$. An equivalent
way of defining $\delta$ is
$$\delta(K_1,K_2) = {\rm haus}(K_1\cap B_n,K_2 \cap B_n),$$
where $B_n$ is the closed unit ball in $\mathbb{R}^n$, and
$$
{\rm haus}(C_1,C_2) := \max\left\{\sup_{{\bm z}\in C_1}{\rm dist}({\bm z},C_2), \sup_{{\bm z}\in C_2}{\rm dist}({\bm z},C_1)\right\}
$$
stands for the Hausdorff distance between the compact sets $C_1,C_2\subset \mathbb{R}^n$ (see \cite[pp. 85-86]{Be93}). General information on the metric $\delta$ can be consulted in
the book by Rockafellar and Wets \cite{RW98}. According to \cite{WW67}, the
operation $\delta: K\to  K^*$ is an isometry on the space $(\C(\mathbb{R}^n), \delta)$, that is to say,
$$
\delta(K_1^*,K_2^*)=\delta(K_1,K_2), ~~~{\rm for~all~} K_1,K_2\in \C(\mathbb{R}^n).
$$
The basic topological properties of the mapping ${\rm SOL}(\cdot) : \mathcal{F}_{m,n}\times\mathbb{R}^n\times \mathcal{F}_{l,n}\times\mathbb{R}^n\times\C(\mathbb{R}^n)\rightarrow 2^{\mathbb{R}}$ are listed
in the following propositions. These propositions are some GPCP versions of the results presented in \cite{LHQ16}. As far as semicontinuity concepts are concerned, we use the
following terminology (see \cite[Section 6.2]{Be93}).

\begin{definition}
Let $W$ and $Y$ be two topological spaces and $\bar w\in W$. The mapping $\Psi:W\rightarrow 2^Y$ is said to be upper semicontinuous at $\bar w$, if for every open set $U$ with $\Psi(\bar w)\subset U$, there exists an open neighborhood $V$ of $\bar w$ such that $\Psi(w)\subset U$ for each $w\in V$.
\end{definition}

\begin{proposition}\label{Propt5}
The following two statements are true:
\begin{itemize}
\itemindent4pt
\item[{\rm(i)}] The set $\Sigma:=\{(\Lambda,\bm a,\Theta,\bm b,K,\bm x) \in \mathcal{F}_{m,n}\times\mathbb{R}^n\times \mathcal{F}_{l,n}\times\mathbb{R}^n\times\C(\mathbb{R}^n)\times\mathbb{R}^n~|~\bm x\in {\rm SOL}(\Lambda,\bm a,\Theta,\bm b,K)\}$ is closed in the product space $\mathcal{F}_{m,n}\times\mathbb{R}^n\times \mathcal{F}_{l,n}\times\mathbb{R}^n\times\C(\mathbb{R}^n)$. In particular, for any
    $\bar \Xi:=(\bar \Lambda,\bar{\bm a},\bar \Theta,\bar{\bm b},\bar{K})\in\mathcal{F}_{m,n}\times\mathbb{R}^n\times \mathcal{F}_{l,n}\times\mathbb{R}^n\times\C(\mathbb{R}^n)$, ${\rm SOL}(\bar \Xi)$ is a closed subset of $\mathbb{R}^n$;
\item[{\rm(ii)}] Let $\bar \Xi:=(\bar \Lambda,\bar{\bm a},\bar \Theta,\bar{\bm b},\bar{K})\in \mathcal{F}_{m,n}\times\mathbb{R}^n\times \mathcal{F}_{l,n}\times\mathbb{R}^n\times\C(\mathbb{R}^n)$. If the leading tensor pair $(\bar{\mathcal{A}}^{(1)},\bar{\mathcal{B}}^{(1)})$ in $(\bar\Lambda,\bar \Theta)$ is an ${\bf R}^{\bar{K}}_0$-tensor pair, then the mapping ${\rm SOL}(\cdot)$ is locally bounded at $\bar \Xi$, i.e., $$\bigcup_{(\Lambda,\bm a,\Theta,\bm b,K)\in \mathcal{N}}{\rm SOL}(\Lambda,\bm a,\Theta,\bm b,K)$$ is bounded for some neighborhood $\mathcal{N}$ of $\bar \Xi$.
\end{itemize}
\end{proposition}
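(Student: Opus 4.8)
\section*{Proof proposal for Proposition \ref{Propt5}}

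The plan is to treat the two statements separately, with the key technical device common to both being a short observation that convergence of cones in the metric $\delta$ is compatible with passing set-membership to the limit. Concretely, I would first record the elementary fact that if $K_j\to\bar K$ in $(\C(\mathbb{R}^n),\delta)$ and $\bm y_j\in K_j$ with $\bm y_j\to\bar{\bm y}$, then $\bar{\bm y}\in\bar K$. To see this, use that $\bm z\mapsto{\rm dist}(\bm z,K)$ is $1$-Lipschitz, so ${\rm dist}(\bar{\bm y},K_j)\le\|\bar{\bm y}-\bm y_j\|+{\rm dist}(\bm y_j,K_j)=\|\bar{\bm y}-\bm y_j\|\to0$; then exploit the positive homogeneity ${\rm dist}(\lambda\bm z,K)=\lambda\,{\rm dist}(\bm z,K)$ valid for a cone $K$ to reduce to unit vectors, giving $|{\rm dist}(\bar{\bm y},K_j)-{\rm dist}(\bar{\bm y},\bar K)|\le\|\bar{\bm y}\|\,\delta(K_j,\bar K)\to0$, whence ${\rm dist}(\bar{\bm y},\bar K)=0$ and $\bar{\bm y}\in\bar K$. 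The same conclusion for the dual cones follows at once from the isometry $\delta(K_j^*,\bar K^*)=\delta(K_j,\bar K)$ quoted above.

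For statement (i), I would take an arbitrary sequence $(\Lambda_j,\bm a_j,\Theta_j,\bm b_j,K_j,\bm x_j)\in\Sigma$ converging to $(\bar\Lambda,\bar{\bm a},\bar\Theta,\bar{\bm b},\bar K,\bar{\bm x})$ and verify that the limit lies in $\Sigma$. Since each component of $F$ and $G$ in \eqref{gpcp} is jointly continuous in the tensor coefficients and in $\bm x$, we have $F_j(\bm x_j)\to\bar F(\bar{\bm x})$ and $G_j(\bm x_j)\to\bar G(\bar{\bm x})$; continuity of the inner product then gives $\langle\bar F(\bar{\bm x}),\bar G(\bar{\bm x})\rangle=\lim_j\langle F_j(\bm x_j),G_j(\bm x_j)\rangle=0$. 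Applying the cone observation to $\bm y_j=F_j(\bm x_j)\in K_j$ yields $\bar F(\bar{\bm x})\in\bar K$, and to $\bm y_j=G_j(\bm x_j)\in K_j^*$ yields $\bar G(\bar{\bm x})\in\bar K^*$; hence $\bar{\bm x}\in{\rm SOL}(\bar\Lambda,\bar{\bm a},\bar\Theta,\bar{\bm b},\bar K)$, proving $\Sigma$ closed. The ``in particular'' assertion is immediate, since ${\rm SOL}(\bar\Xi)$ is the slice of the closed set $\Sigma$ over the fixed datum $\bar\Xi$.

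For statement (ii), I would argue by contradiction in the spirit of the boundedness part of Theorem \ref{theorem1.6}, now allowing the data to vary. If local boundedness fails at $\bar\Xi$, one can extract data $\Xi_j:=(\Lambda_j,\bm a_j,\Theta_j,\bm b_j,K_j)\to\bar\Xi$ and solutions $\bm x_j\in{\rm SOL}(\Xi_j)$ with $\|\bm x_j\|\to\infty$; passing to a subsequence, normalize $\bar{\bm x}_j:=\bm x_j/\|\bm x_j\|\to\bar{\bm x}$ with $\|\bar{\bm x}\|=1$, so $\bar{\bm x}\neq\bm 0$. Dividing the membership $F_j(\bm x_j)\in K_j$ by $\|\bm x_j\|^{m-1}$ keeps the scaled vector in $K_j$, and in the limit only the leading homogeneous term survives, because the lower-order tensor terms carry extra powers of $\|\bm x_j\|^{-1}$ and $\bm a_j/\|\bm x_j\|^{m-1}\to\bm 0$; thus the scaled vector tends to $\bar{\mathcal{A}}^{(1)}\bar{\bm x}^{m-1}$, and the cone observation forces $\bar{\mathcal{A}}^{(1)}\bar{\bm x}^{m-1}\in\bar K$. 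The analogous scaling of $G_j(\bm x_j)\in K_j^*$ by $\|\bm x_j\|^{l-1}$ gives $\bar{\mathcal{B}}^{(1)}\bar{\bm x}^{l-1}\in\bar K^*$, and dividing the orthogonality relation by $\|\bm x_j\|^{m+l-2}$ yields $\langle\bar{\mathcal{A}}^{(1)}\bar{\bm x}^{m-1},\bar{\mathcal{B}}^{(1)}\bar{\bm x}^{l-1}\rangle=0$. Hence $\bar{\bm x}\in\mathbb{R}^n\setminus\{\bm 0\}$ solves system \eqref{equation1.3}, contradicting that $(\bar{\mathcal{A}}^{(1)},\bar{\mathcal{B}}^{(1)})$ is an ${\bf R}^{\bar K}_0$-tensor pair.

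The main obstacle, and what distinguishes this from the fixed-data boundedness argument inside Theorem \ref{theorem1.6}, is the careful handling of the \emph{varying} cones: one must be sure that membership in $K_j$ survives in the limit cone $\bar K$ under the metric $\delta$, which is precisely what the homogeneity-plus-Lipschitz observation delivers, and that the corresponding dual statement is available through the isometry $\delta(K^*,\cdot)=\delta(K,\cdot)$. Once that observation is in hand, both statements reduce to routine limiting computations on the homogeneous leading terms.
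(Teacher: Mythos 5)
Your proposal is correct and follows essentially the same route as the paper's proof: part (i) by passing to the limit along a convergent sequence of solutions and data, and part (ii) by contradiction, normalizing an unbounded solution sequence and scaling by $\|\bm x_j\|^{m-1}$, $\|\bm x_j\|^{l-1}$, and $\|\bm x_j\|^{m+l-2}$ so that only the leading tensors survive, contradicting the ${\bf R}^{\bar K}_0$-pair property. The only difference is that you explicitly prove (via $1$-Lipschitzness and positive homogeneity of ${\rm dist}(\cdot,K)$, plus the isometry $\delta(K_1^*,K_2^*)=\delta(K_1,K_2)$) the fact that cone membership survives limits under the metric $\delta$, a step the paper invokes implicitly when it "passes to the limit"; this is a welcome clarification, not a different method.
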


\begin{proof}
{\bf Item} (i). The closedness of $\Sigma$ amounts to saying that
\begin{equation*}\label{DPk}
\left.
\begin{array}{r}
\Xi_i:=(\Lambda_i,{\bm a}^{(i)},\Theta_i,{\bm b}^{(i)},K_i)\rightarrow \bar\Xi:=(\bar{\Lambda},\bar{\bm a},\bar{\Theta},\bar{\bm b},\bar{K})\\
{\bm x}^{(i)}\rightarrow \bar{\bm x}\\
{\bm x}^{(i)}\in {\rm SOL}(\Xi_i)
\end{array}
\right\}\Rightarrow \bar{\bm x}\in{\rm SOL}(\bar\Xi).
\end{equation*}
Since ${\bm x}^{(i)}\in {\rm SOL}(\Xi_i)$ for every $i$, we have
\begin{equation}\label{nuTSVCP}
K_i\ni\left(\sum_{k=1}^{m-1}{\mathcal A}^{(k)}_i({\bm x}^{(i)})^{m-k}+{\bm a}^{(i)}\right)\perp\left(\sum_{p=1}^{l-1}{\mathcal B}^{(p)}_i({\bm x}^{(i)})^{l-p}+{\bm b}^{(i)}\right)\in K_i^*.
\end{equation}
Consequently, since ${\bm x}^{(i)}\rightarrow \bar{\bm x}$ as $i\rightarrow\infty$ and $\delta(K_1^*,K_2^*)=\delta(K_1,K_2)$ for any $K_1,K_2\in \C(\mathbb{R}^n)$, by passing to the limit in (\ref{nuTSVCP}), one gets
$$
\bar K\ni\left\{\sum_{k=1}^{m-1}\bar{{\mathcal A}}^{(k)}\bar{\bm x}^{m-k}+\bar{\bm a}\right\}\perp\left\{\sum_{p=1}^{l-1}\bar{{\mathcal B}}^{(p)}\bar{{\bm x}}^{l-p}+\bar{{\bm b}}\right\}\in\bar K^*,
$$
which implies $\bar{\bm x}\in{\rm SOL}(\bar\Xi)$. We proved the first part (i) of this proposition.

{\bf Item} (ii). Suppose that the map ${\rm SOL}(\cdot)$ is not locally bounded at $\bar\Xi$. Then there exists sequences $\{\Xi_i=(\Lambda_i,{\bm a}^{(i)},\Theta_i,{\bm b}^{(i)},K_i)\}$ and $\{{\bm x}^{(i)}\}$ satisfying
$$
\|\Lambda_i-\bar{\Lambda}\|_F\rightarrow 0,~~\|{\bm a}^{(i)}-\bar{\bm a}\|\rightarrow 0,~~\|\Theta_i-\bar{\Theta}\|_F\rightarrow 0,~~\|{\bm b}^{(i)}-\bar{\bm b}\|\rightarrow 0,~~\delta(K_i,\bar K)\rightarrow 0,
$$
and $\|{\bm x}^{(i)}\|\rightarrow +\infty$ such that ${\bm x}^{(i)}\in {\rm SOL}(\Xi_i)$, i.e., (\ref{nuTSVCP}) holds for any $i=1,2,\ldots$. Without loss of generality, we assume that ${\bm x}^{(i)}/\|{\bm x}^{(i)}\|\rightarrow \bar{\bm x}$ as $i\rightarrow\infty$. It is obvious that $\|\bar{\bm x}\|=1$, which means $\bar {\bm x}\in \mathbb{R}^n\backslash\{\bm 0\}$. Furthermore, by (\ref{nuTSVCP}), we have
$$
\left\{
\begin{array}{l}
\displaystyle\frac{{\bm a}^{(i)}}{\|{\bm x}^{(i)}\|^{m-1}}+\sum_{k=1}^{m-1}\frac{1}{\|{\bm x}^{(i)}\|^{k-1}}{\mathcal A}^{(k)}_i({\bm x}^{(i)}/\|{\bm x}^{(i)}\|)^{m-k}\in K_i,\\
\displaystyle\frac{{\bm b}^{(i)}}{\|{\bm x}^{(i)}\|^{l-1}}+\sum_{p=1}^{l-1}\frac{1}{\|{\bm x}^{(i)}\|^{p-1}}{\mathcal B}^{(p)}_i({\bm x}^{(i)}/\|{\bm x}^{(i)}\|)^{l-p}\in K_i^*
\end{array}
\right.
$$
and
$$\displaystyle\frac{\left\langle\sum_{k=1}^{m-1}{\mathcal A}^{(k)}_i({\bm x}^{(i)})^{m-k}+{\bm a}^{(i)},\sum_{p=1}^{l-1}{\mathcal B}^{(p)}_i({\bm x}^{(i)})^{l-p}+{\bm b}^{(i)}\right\rangle}{\|{\bm x}^{(i)}\|^{m+l-2}}=0.
$$
By passing to the limit in the above expression, it holds that
$$
\bar{\mathcal A}^{(1)}\bar{\bm x}^{m-1}\in\bar{K},\quad
\bar{\mathcal B}^{(1)}\bar{\bm x}^{l-1}\in\bar{K}^*,\quad {\rm and} \quad
\left\langle\bar{\mathcal A}^{(1)}\bar{\bm x}^{m-1},\bar{\mathcal B}^{(1)}\bar{\bm x}^{l-1}\right\rangle=0.
$$
It contradicts the condition that $(\bar{\mathcal A}^{(1)},\bar{\mathcal B}^{(1)})$ is an ${\bf R}^{\bar{K}}_0$-tensor pair, because $\bar{\bm x}\in\mathbb{R}^n\backslash\{\bm 0\}$.
\end{proof}

\begin{proposition}\label{Propt7}
 Let $(\bar \Lambda,\bar \Theta,\bar{K})\in \mathcal{F}_{m,n}\times \mathcal{F}_{l,n}\times\C(\mathbb{R}^n)$. If the leading tensor pair $(\bar{\mathcal{A}}^{(1)},\bar{\mathcal{B}}^{(1)})$ in $(\bar\Lambda,\bar \Theta)$ is an ${\bf R}^{\bar{K}}_0$-tensor pair, then the set ${\rm D}_{\rm SOL}:=\{(\bm a,\bm b)\in \mathbb{R}^n\times\mathbb{R}^n~|~{\rm SOL}(\bar \Lambda,\bm a,\bar \Theta,\bm b,\bar{K})\neq\emptyset\}$ is closed.
\end{proposition}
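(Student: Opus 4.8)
The plan is to establish sequential closedness of ${\rm D}_{\rm SOL}$ directly, leaning on both parts of Proposition \ref{Propt5}. Concretely, I would take an arbitrary sequence $\{(\bm a^{(i)},\bm b^{(i)})\}\subset {\rm D}_{\rm SOL}$ converging to some $(\bar{\bm a},\bar{\bm b})\in\mathbb{R}^n\times\mathbb{R}^n$, and aim to show that $(\bar{\bm a},\bar{\bm b})\in{\rm D}_{\rm SOL}$, i.e. that ${\rm SOL}(\bar\Lambda,\bar{\bm a},\bar\Theta,\bar{\bm b},\bar K)\neq\emptyset$. By the definition of ${\rm D}_{\rm SOL}$, for each $i$ there is a point $\bm x^{(i)}\in{\rm SOL}(\bar\Lambda,\bm a^{(i)},\bar\Theta,\bm b^{(i)},\bar K)$. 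Writing $\Xi_i:=(\bar\Lambda,\bm a^{(i)},\bar\Theta,\bm b^{(i)},\bar K)$ and $\bar\Xi:=(\bar\Lambda,\bar{\bm a},\bar\Theta,\bar{\bm b},\bar K)$, the convergence $(\bm a^{(i)},\bm b^{(i)})\to(\bar{\bm a},\bar{\bm b})$ with the remaining data held fixed gives $\Xi_i\to\bar\Xi$ in the product space.

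The decisive step is to show that $\{\bm x^{(i)}\}$ is bounded. Here I would invoke Proposition \ref{Propt5}(ii): since $(\bar{\mathcal A}^{(1)},\bar{\mathcal B}^{(1)})$ is assumed to be an ${\bf R}^{\bar K}_0$-tensor pair, the mapping ${\rm SOL}(\cdot)$ is locally bounded at $\bar\Xi$, so there is a neighborhood $\mathcal{N}$ of $\bar\Xi$ for which $\bigcup_{(\Lambda,\bm a,\Theta,\bm b,K)\in\mathcal{N}}{\rm SOL}(\Lambda,\bm a,\Theta,\bm b,K)$ is bounded. Because $\Xi_i\to\bar\Xi$, we have $\Xi_i\in\mathcal{N}$ for all sufficiently large $i$, whence $\bm x^{(i)}\in{\rm SOL}(\Xi_i)$ lies in this bounded union for those $i$; discarding finitely many initial terms, $\{\bm x^{(i)}\}$ is bounded.

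Boundedness then lets me extract a subsequence $\bm x^{(i_k)}\to\bar{\bm x}$. To finish, I would apply the closedness of the graph $\Sigma$ from Proposition \ref{Propt5}(i): along the subsequence we have $\Xi_{i_k}\to\bar\Xi$, $\bm x^{(i_k)}\to\bar{\bm x}$, and $\bm x^{(i_k)}\in{\rm SOL}(\Xi_{i_k})$, so the implication encoded in the closedness of $\Sigma$ yields $\bar{\bm x}\in{\rm SOL}(\bar\Xi)$. Thus ${\rm SOL}(\bar\Lambda,\bar{\bm a},\bar\Theta,\bar{\bm b},\bar K)$ is nonempty and $(\bar{\bm a},\bar{\bm b})\in{\rm D}_{\rm SOL}$, proving that ${\rm D}_{\rm SOL}$ is closed.

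The only genuinely substantive point is the boundedness of $\{\bm x^{(i)}\}$, and the ${\bf R}^{\bar K}_0$ hypothesis is exactly what powers it through Proposition \ref{Propt5}(ii); everything else is a routine limiting argument. The one detail to be careful about is that local boundedness is phrased over a neighborhood of the full tuple $\bar\Xi$ in $\mathcal{F}_{m,n}\times\mathbb{R}^n\times\mathcal{F}_{l,n}\times\mathbb{R}^n\times\C(\mathbb{R}^n)$, so I should note that the sequence $\Xi_i$, which varies only $\bm a$ and $\bm b$, indeed enters that neighborhood. If one preferred a self-contained argument, the boundedness could instead be reproven by the normalization trick $\bm x^{(i)}/\|\bm x^{(i)}\|\to\bar{\bm x}$ used in the proof of Proposition \ref{Propt5}(ii), deriving the same ${\bf R}^{\bar K}_0$ contradiction.
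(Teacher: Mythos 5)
Your proposal is correct and follows essentially the same route as the paper's proof: pick solutions $\bm x^{(i)}\in{\rm SOL}(\bar\Lambda,\bm a^{(i)},\bar\Theta,\bm b^{(i)},\bar K)$, use Item (ii) of Proposition \ref{Propt5} (local boundedness at $\bar\Xi$, valid under the ${\bf R}^{\bar K}_0$ hypothesis) to get boundedness of $\{\bm x^{(i)}\}$, then pass to a convergent subsequence and apply Item (i) (closedness of the solution graph) to conclude the limit solves the limiting problem. Your extra remark that $\Xi_i\to\bar\Xi$ ensures the sequence eventually enters the neighborhood $\mathcal{N}$ is a detail the paper leaves implicit, but the argument is identical in substance.
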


\begin{proof}
Take any sequence $\{({\bm a}^{(i)},{\bm b}^{(i)})\}\subset {\rm D}_{\rm SOL}$ with $({\bm a}^{(i)},{\bm b}^{(i)})\rightarrow (\bar {\bm a},\bar {\bm b})$ as $i\rightarrow\infty$. Then, we just need to prove $(\bar {\bm a},\bar {\bm b})\in {\rm D}_{\rm SOL}$. Since ${\rm SOL}(\bar\Lambda,{\bm a}^{(i)},\bar\Theta,{\bm b}^{(i)},\bar K)\neq\emptyset$, let us pick up ${\bm x}^{(i)}\in {\rm SOL}(\bar \Lambda,{\bm a}^{(i)},\bar \Theta,{\bm b}^{(i)},\bar K)$ for every $i$. It follows immediately from {\bf Item} (ii) of Proposition \ref{Propt5} that $\{{\bm x}^{(i)}\}$ is bounded. Without loss of generality, we assume that $\bm x^{(i)}\rightarrow\bar{\bm x}$ as $i\rightarrow\infty$. Consequently, by {\bf Item} (i) of Proposition \ref{Propt5}, we know that $\bar{\bm x}\in {\rm SOL}(\bar \Lambda,\bar{\bm a},\bar \Theta,\bar{\bm b},\bar{K})$, which implies ${\rm SOL}(\bar \Lambda,\bar{\bm a},\bar \Theta,\bar{\bm b},\bar{K})\neq\emptyset$. Therefore, we obtain $(\bar {\bm a},\bar {\bm b})\in {\rm D}_{\rm SOL}$ and complete the proof.
\end{proof}

From Theorem \ref{theorem1.6} and Proposition \ref{Propt5}, we immediately have the following corollary.

\begin{corollary}\label{compact} Let $(\bar\Lambda,\bar{\bm a},\bar\Theta,\bar{\bm b},\bar K) \in \mathcal{F}_{m,n}\times\mathbb{R}^n\times\mathcal{F}_{l,n}\times\mathbb{R}^n\times\C(\mathbb{R}^n)$. If the leading tensor pair $(\bar{\mathcal A}^{(1)},\bar{\mathcal B}^{(1)})$ in $(\bar\Lambda, \bar\Theta )$ is an ${\bf R}^{\bar{K}}_0$-tensor pair, then ${\rm SOL}(\bar\Lambda,\bar{\bm a},\bar\Theta,\bar{\bm b},\bar K)$ is compact.
\end{corollary}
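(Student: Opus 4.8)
The plan is to read off the conclusion from the Heine--Borel characterization of compactness in $\R^n$: writing $\bar\Xi := (\bar\Lambda,\bar{\bm a},\bar\Theta,\bar{\bm b},\bar K)$, it suffices to show that ${\rm SOL}(\bar\Xi)$ is simultaneously closed and bounded. Both properties are furnished directly by Proposition \ref{Propt5}, so the argument is essentially a matter of assembling its two items; I do not anticipate any genuine obstacle here.

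First I would dispatch closedness. This is exactly the ``in particular'' assertion of Item (i) of Proposition \ref{Propt5}: since the set $\Sigma$ is closed in the product space $\mathcal{F}_{m,n}\times\R^n\times \mathcal{F}_{l,n}\times\R^n\times\C(\R^n)\times\R^n$, the fiber ${\rm SOL}(\bar\Xi)$ obtained by freezing the parameter tuple at $\bar\Xi$ is a closed subset of $\R^n$. It is worth noting that this step does not invoke the ${\bf R}^{\bar K}_0$-tensor pair hypothesis at all; closedness is unconditional.

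Next I would establish boundedness, and this is where the hypothesis that $(\bar{\mathcal A}^{(1)},\bar{\mathcal B}^{(1)})$ is an ${\bf R}^{\bar K}_0$-tensor pair is used. By Item (ii) of Proposition \ref{Propt5}, the mapping ${\rm SOL}(\cdot)$ is locally bounded at $\bar\Xi$, i.e.\ there is a neighborhood $\mathcal{N}$ of $\bar\Xi$ for which $\bigcup_{(\Lambda,\bm a,\Theta,\bm b,K)\in\mathcal{N}}{\rm SOL}(\Lambda,\bm a,\Theta,\bm b,K)$ is bounded. Since $\bar\Xi\in\mathcal{N}$, the single set ${\rm SOL}(\bar\Xi)$ is contained in this bounded union and is therefore bounded as well. (Equivalently, one may simply re-run the boundedness part of the proof of Theorem \ref{theorem1.6}, which already relied only on the ${\bf R}^{\bar K}_0$ property of the leading pair; this is the reason Theorem \ref{theorem1.6} is cited alongside Proposition \ref{Propt5}.)

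Combining the two steps, ${\rm SOL}(\bar\Xi)$ is closed and bounded in $\R^n$ and hence compact, which completes the proof. The one point I would flag explicitly is that no nonemptiness is claimed or needed: the empty set is vacuously compact, consistent with the fact that the ${\bf R}^{\bar K}_0$ condition is strictly weaker than the ${\bf ER}^{\bar K}$ condition of Theorem \ref{theorem1.6} and therefore cannot by itself guarantee solvability. Thus the existence part of Theorem \ref{theorem1.6} plays no role, and the corollary rests purely on the closedness and local boundedness supplied by Proposition \ref{Propt5}.
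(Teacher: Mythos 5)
Your proposal is correct and matches the paper's intended argument exactly: the paper states the corollary follows immediately from Proposition \ref{Propt5} (Item (i) for closedness, Item (ii) for boundedness under the ${\bf R}^{\bar K}_0$ hypothesis) together with the Heine--Borel theorem, which is precisely your assembly. Your remarks that nonemptiness is neither claimed nor available, and that the citation of Theorem \ref{theorem1.6} refers to the boundedness portion of its proof (which used only the ${\bf R}^{\bar K}_0$ property), are also accurate.
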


\begin{proposition}\label{Propt6}
Let $\bar\Xi:=(\bar \Lambda,\bar{\bm a},\bar \Theta,\bar{\bm b},\bar{K})\in \mathcal{F}_{m,n}\times\mathbb{R}^n\times \mathcal{F}_{l,n}\times\mathbb{R}^n\times\C(\mathbb{R}^n)$. If the leading tensor pair $(\bar{\mathcal A}^{(1)},\bar{\mathcal B}^{(1)})$ in $(\bar\Lambda, \bar\Theta )$ is an ${\bf R}^{\bar{K}}_0$-tensor pair, then the mapping ${\rm SOL}(\cdot)$ is upper semicontinuous at $\bar \Xi$.
\end{proposition}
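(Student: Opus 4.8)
The plan is to derive the upper semicontinuity of ${\rm SOL}(\cdot)$ at $\bar\Xi$ purely from the two facts already established in Proposition \ref{Propt5}, namely the closedness of the graph (Item (i)) and the local boundedness at $\bar\Xi$ under the ${\bf R}^{\bar K}_0$-pair hypothesis (Item (ii)). This is the classical principle that a set-valued map which has a closed graph and is locally bounded at a point is upper semicontinuous there; I would carry it out by contradiction, exploiting the fact that the product space $\mathcal{F}_{m,n}\times\mathbb{R}^n\times \mathcal{F}_{l,n}\times\mathbb{R}^n\times\C(\mathbb{R}^n)$ is metric (hence first countable), so that a sequential argument is legitimate.

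Concretely, I would first suppose that ${\rm SOL}(\cdot)$ fails to be upper semicontinuous at $\bar\Xi$. By the definition of upper semicontinuity, this produces an open set $U$ with ${\rm SOL}(\bar\Xi)\subset U$ such that every neighborhood of $\bar\Xi$ contains a point whose solution set is not contained in $U$. Choosing a shrinking countable base of neighborhoods, I would extract a sequence $\Xi_i\to\bar\Xi$ together with points ${\bm x}^{(i)}\in{\rm SOL}(\Xi_i)$ with ${\bm x}^{(i)}\notin U$. The next step is to invoke Item (ii) of Proposition \ref{Propt5}: since $\Xi_i$ eventually lies in the neighborhood $\mathcal{N}$ on which $\bigcup{\rm SOL}$ is bounded, the sequence $\{{\bm x}^{(i)}\}$ is bounded, and therefore admits a convergent subsequence ${\bm x}^{(i_k)}\to\bar{\bm x}$.

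I would then close the loop with two observations. Because $\mathbb{R}^n\setminus U$ is closed and contains every ${\bm x}^{(i_k)}$, the limit satisfies $\bar{\bm x}\in\mathbb{R}^n\setminus U$; in particular $\bar{\bm x}\notin{\rm SOL}(\bar\Xi)$ since ${\rm SOL}(\bar\Xi)\subset U$. On the other hand, $\Xi_{i_k}\to\bar\Xi$, ${\bm x}^{(i_k)}\to\bar{\bm x}$ and ${\bm x}^{(i_k)}\in{\rm SOL}(\Xi_{i_k})$, so the closedness of $\Sigma$ from Item (i) of Proposition \ref{Propt5} forces $\bar{\bm x}\in{\rm SOL}(\bar\Xi)$. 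These two conclusions contradict each other, which establishes the claim.

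The genuine content of the argument is concentrated in the local boundedness step, which is precisely where the ${\bf R}^{\bar K}_0$-tensor pair assumption on $(\bar{\mathcal A}^{(1)},\bar{\mathcal B}^{(1)})$ enters; everything else is a soft topological packaging of the closed-graph and locally-bounded properties. The main obstacle to anticipate is therefore not in the present proof but in correctly citing Item (ii): one must make sure the escaping sequence $\{{\bm x}^{(i)}\}$ is forced to be bounded, for without that boundedness the convergent-subsequence extraction—and hence the contradiction with the closed graph—would break down. Once local boundedness is in hand, the sequential contradiction argument is routine.
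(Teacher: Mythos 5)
Your proposal is correct and follows essentially the same route as the paper's own proof: a contradiction argument that extracts a sequence $\Xi_i\to\bar\Xi$ with solutions ${\bm x}^{(i)}\in{\rm SOL}(\Xi_i)\setminus U$, uses Item (ii) of Proposition \ref{Propt5} to obtain a convergent subsequence, and then applies the closed-graph property from Item (i) to force the limit into ${\rm SOL}(\bar\Xi)$, contradicting ${\rm SOL}(\bar\Xi)\subset U$. Your added remark about metrizability justifying the sequential characterization is a small refinement the paper leaves implicit, but the argument is the same.
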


\begin{proof}
Suppose, on the contrary, that the mapping ${\rm SOL}(\cdot)$ is not upper semicontinuous at $\bar\Xi$. Then we could find an open set $\bar U\subset \mathbb{R}^n$ with ${\rm SOL}(\bar\Xi)\subset \bar U$ and a sequence $\{\Xi_i:=(\Lambda_i,{\bm a}^{(i)},\Theta_i,{\bm b}^{(i)},K_i)\}$ satisfying $\Xi_i\rightarrow \bar\Xi$, such that ${\rm SOL}(\Xi_i)\cap (\mathbb{R}^n\backslash \bar U)\neq\emptyset
$
for any $i=1,2,\cdots$. Now, for each $i$, pick up ${\bm x}^{(i)}\in{\rm SOL}(\Xi_i)\cap (\mathbb{R}^n\backslash \bar U)$. It follows from {\bf Item} (ii) of Proposition \ref{Propt5} that the sequence $\{{\bm x}^{(i)}\}$ admits a converging subsequence. By {\bf Item} (i) of Proposition \ref{Propt5}, the corresponding
limit must be in ${\rm SOL}(\bar\Xi)\cap (\mathbb{R}\backslash\bar U)$, which, together with  ${\rm SOL}(\bar\Xi)\subset \bar U$, leads to a contradiction.
\end{proof}

\section{Error bound analysis}\label{SecEA}
Among all the useful tools for theoretical and numerical treatment to NCPs, the global error bound, i.e., an upper bound estimation of the distance from a given point in $\mathbb{R}^n$ to the solution set of the
problem in terms of some residual functions, is an important one. In this section, we focus on studying the global error bound of the solution set of ${\rm GPCP}(\Lambda,\bm a,\Theta,\bm b,K)$ with $K:=\mathbb{R}^n_+$, and in particular, we denote the resulting model by ${\rm GPCP}(\Lambda,\bm a,\Theta,\bm b)$ for simplicity. Moreover,
we use $\Omega$ to represent the solution set of ${\rm GPCP}(\Lambda,\bm a,\Theta,\bm b)$. Let $r(\cdot):\mathbb{R}^n\rightarrow\mathbb{R}$. We say that $r(\bm x)$ is a residual function of ${\rm GPCP}(\Lambda,\bm a,\Theta,\bm b)$, if $r(\bm x)\geq0$, and $r(\bm x)=0$ if and only if $\bm x\in\Omega$. For the two polynomials $F,G$ given in \eqref{gpcp}, let $\min \{F(\bm x), G(\bm x)\}$ denote the vector in $\mathbb{R}^n$ whose $j$-th component is $\min \{F_j(\bm x), G_j(\bm x)\}$ for $j\in [n]$. Obviously, the function $r(\bm x)$, defined by
\begin{equation}\label{ICP52}
r(\bm x)=\|\min\{F(\bm x),G(\bm x)\}\|,
\end{equation}
provides a residual function of ${\rm GPCP}(\Lambda,\bm a,\Theta,\bm b)$, and it is called the natural residual for
${\rm GPCP}(\Lambda,\bm a,\Theta,\bm b)$ (see \cite{KF96}).

\begin{definition}\label{ICPD51}
Let $r(\bm x)$ be a residual function of ${\rm GPCP}(\Lambda,\bm a,\Theta,\bm b)$. We say $r(\bm x)$ is a local error bound for
${\rm GPCP}(\Lambda,\bm a,\Theta,\bm b)$, if there exist three constants $c>0$, $\tau\in(0,1]$ and $\varepsilon>0$ such that
\begin{equation}\label{errorbound}
{\rm dist}(\bm x,\Omega)\leq c~ r(\bm x)^\tau
\end{equation}
holds for any $\bm x\in\mathbb{R}^n$ with $r(\bm x)\leq\varepsilon$, where ${\rm dist}(\bm x,\Omega)$ denotes the distance between the point $\bm x$ and the set $\Omega$. Furthermore, if \eqref{errorbound} holds for any $\bm x\in\mathbb{R}^n$, then $r(\bm x)$ is called as a global error bound for ${\rm GPCP}(\Lambda,\bm a,\Theta,\bm b)$. If \eqref{errorbound} holds for $\tau=1$, $r(\bm x)$ is called as a Lipschitzian error bound.
\end{definition}

Below, we first list the fundamental assumption used in \cite{XZ02} for the analysis of the global error bound of ${\rm GCP}(F,G,\mathbb{R}_+^n)$.

\begin{assumption}\label{ICPA40}
\begin{itemize}
\item[{\rm(i)}] There exists a  constant  $\rho>0$  such  that
\begin{equation}\label{ICP50}
\max_{j\in[n]}~[F_j(\bm x)-F_j(\bm y)][G_j(\bm x)-G_j(\bm y)]>\rho\|\bm x-\bm y\|^2,~~~{\rm for~ all~} \bm x,\bm y\in\mathbb{R}^n.
\end{equation}
\item[{\rm(ii)}] There exists a  constant  $\mu>0$  such  that
\begin{equation}\label{ICP501}
[F(\bm x)-F(\bm y)]^\t [G(\bm x)-G(\bm y)]>\mu\|\bm x-\bm y\|^2,~~~{\rm for~ all~}\bm x,\bm y\in\mathbb{R}^n.
\end{equation}
\end{itemize}
\end{assumption}

It is easy to see that a pair of functions $(F,G)$ satisfying (\ref{ICP501}) must meet (\ref{ICP50}).
In \cite{KF96}, the authors proved that the natural
residual $r(\bm x)$ defined by \eqref{ICP52} provides a global error bound for ${\rm GCP}(F,G,\mathbb{R}_+^n)$ under Item (i) of Assumption \ref{ICPA40}. Unfortunately, Assumption \ref{ICPA40} often does not hold when we discuss ${\rm GCP}(F,G,\mathbb{R}_+^n)$, especially in the case where the related functions $F$ and $G$ are both polynomials with high degree. To overcome this inadequacy, we now introduce the following assumption, which is slightly weaker than \cite[Assumption 5.1]{Hua05}.

\begin{assumption}\label{ICPA51}
For any sequence $\{{\bm x}^{(i)}\}$ in $\mathbb{R}^n$ with ${\bm x}^{(i)}\rightarrow\bar{\bm x}\in\Omega$ as $i\rightarrow\infty$ and ${\bm x}^{(i)}\neq\bar{\bm x}$, there exist a subsequence $\{\bm x^{(i_j)}\}$ of $\{\bm x^{(i)}\}$ and an index $j_0\in[n]$ such that
\begin{equation}\label{ICP53}
\lim_{i_j\rightarrow\infty}\frac{{\rm min}\left\{F_{j_0}({\bm x}^{(i_j)}),G_{j_0}({\bm x}^{(i_j)})\right\}}{\|{\bm x}^{(i_j)}-\bar{\bm x}\|}\neq0.
\end{equation}
\end{assumption}

Below, we use an example to illustrate that satisfying Assumption \ref{ICPA51} does not necessarily mean satisfying Assumption \ref{ICPA40}.

\begin{exam}\label{exam5.1}
We still consider the tensor pair $(\mathcal{A},\mathcal{B})$ in Example \ref{exam2-1}. Let $F({\bm x})={\mathcal A}{\bm x}^3+\bm a$ and $G({\bm x})={\mathcal B}{\bm x}^3+\bm b$, with $\bm a=(-1,0)^\top$ and $\bm b=(1,0)^\top$.
We first claim that $(F(\bm x),G(\bm x))$ does not satisfy \eqref{ICP50}. Suppose that the pair $(F(\bm x),G(\bm x))$ satisfies \eqref{ICP50}, i.e., there exists a number $\rho>0$ such that \eqref{ICP50} holds. Let $\bm x=(\varepsilon,\varepsilon/2)^\top$ and $\bm y=(\varepsilon,\varepsilon)^\top$ with $\varepsilon>0$. Then, by \eqref{ICP50} and the fact that $\varepsilon>0$, we have $7\varepsilon^4\geq 16\rho$. Consequently, by letting $\varepsilon \rightarrow 0$, we obtain a contradiction, since $\rho >0$.

We now can claim that Assumption \ref{ICPA51} holds for $(F,G)$. First, it is easy to see that the solution set $\Omega=\{\bar {\bm x}=(1,1)^\top\}$. For any sequence $\{{\bm x}^{(i)}\}$ in $\mathbb{R}^2$ with $\bm x^{(i)}\rightarrow \bar{\bm x}$ as $i\rightarrow\infty$ and ${\bm x}^{(i)}\neq \bar {\bm x}$, let us write $\bm x^{(i)}=(1+\tau_i,1+\varepsilon_i)^\top$, where $(\tau_i,\varepsilon_i)^\top\rightarrow(0,0)^\top$ as $i\rightarrow\infty$. For the considered pair $(\mathcal{A},\mathcal{B})$, we have
\begin{equation*}\label{YYH1}
\min\{F_1(\bm x^{(i)}),G_1(\bm x^{(i)})\}=F_1(\bm x^{(i)})=\tau_i^3+3\tau_i^2+3\tau_i
\end{equation*}
and
\begin{equation*}\label{ICPJ1}
\min\{F_2(\bm x^{(i)}),G_2(\bm x^{(i)})\}=\left\{
\begin{array}{ll}
\varepsilon_i^3+3\varepsilon_i^2+3\varepsilon_i-\tau_i^3-3\tau_i^2-3\tau_i,&{\rm if}~\tau_i\geq\varepsilon_i,\\
(1+\varepsilon_i)^2(\varepsilon_i-\tau_i),&{\rm if}~\tau_i<\varepsilon_i.
\end{array}
\right.
\end{equation*}
It is obvious that, if $\tau_i=0$ for any $i$, then $\varepsilon_i\neq 0$ for any $i$, and
\begin{equation*}\label{2distbar0}
\lim_{i\rightarrow\infty}\frac{{\rm min}\left\{F_2({\bm x}^{(i)}),G_2({\bm x}^{(i)})\right\}}{\|{\bm x}^{(i)}-\bar{\bm x}\|}
=\left\{
\begin{array}{ll}
-\displaystyle\lim_{i\rightarrow\infty}\varepsilon_i^2+3\varepsilon_i+3=-3,&{\rm if}~0\geq\varepsilon_i,\\
\displaystyle\lim_{i\rightarrow\infty}(1+\varepsilon_i)^2=1,&{\rm if}~0<\varepsilon_i.\end{array}
\right.
\end{equation*}
Hence, \eqref{ICP53} holds for $i_0=2$. Similarly, if $\varepsilon_i=0$ for any $i$, then we can obtain that \eqref{ICP53} holds for $i_0=1$. Now we discuss the case where $\{\tau_i\}$ includes a subsequences $\{\tau_{i_j}\}$ with $\tau_{i_j}\neq 0$ for any $j$, and $\{\varepsilon_i\}$ includes a subsequences $\{\varepsilon_{i_s}\}$ with $\varepsilon_{i_s}\neq 0$ for any $s$. Without loss of generality, we assume that $\tau_i,\varepsilon_i\neq 0$ for any $i$. In this case, if $
\lim_{i\rightarrow\infty}\frac{{\rm min}\left\{F_1({\bm x}^{(i)}),G_1({\bm x}^{(i)})\right\}}{\|{\bm x}^{(i)}-\bar{\bm x}\|}$ does not exist, or exists but equals to nonzero, then \eqref{ICP53} holds for $i_0=1$.
Now we assume that $
\lim_{i\rightarrow\infty}\frac{{\rm min}\left\{F_1({\bm x}^{(i)}),G_1({\bm x}^{(i)})\right\}}{\|{\bm x}^{(i)}-\bar{\bm x}\|}=0$, which means
\begin{equation}\label{TTRE}
\lim_{i\rightarrow\infty}\frac{3+3\tau_i+\tau_i^2}{\sqrt{1+(\varepsilon_i/\tau_i)^2}}=0.
\end{equation}
Since $\tau_i\rightarrow 0$ as $i\rightarrow\infty$, by \eqref{TTRE} we know $\lim_{i\rightarrow\infty}\varepsilon_i/\tau_i=\infty$.
Thus, we have
\begin{equation*}\label{2distbar}
\lim_{i\rightarrow\infty}\frac{{\rm min}\left\{F_2({\bm x}^{(i)}),G_2({\bm x}^{(i)})\right\}}{\|{\bm x}^{(i)}-\bar{\bm x}\|}
=\left\{
\begin{array}{ll}
\begin{array}{l}
\displaystyle\lim_{i\rightarrow\infty}\frac{\varepsilon_i^3+3\varepsilon_i^2+3\varepsilon_i-\tau_i^3-3\tau_i^2-3\tau_i}{\sqrt{\tau_i^2+\varepsilon_i^2}}\\
\qquad\;\;=\pm\displaystyle\lim_{i\rightarrow\infty}\frac{\varepsilon_i^2+3\varepsilon_i+3}{\sqrt{(\tau_i/\varepsilon_i)^2+1}}=\pm 3,\\
\end{array}&{\rm if}~\tau_i\geq\varepsilon_i,\\
\pm\displaystyle\lim_{i\rightarrow\infty}\frac{(1+\varepsilon_i)^2(1-\tau_i/\varepsilon_i)}{\sqrt{(\tau_i/\varepsilon_i)^2+1}}=\pm1
,&{\rm if}~\tau_i<\varepsilon_i,\end{array}
\right.
\end{equation*}
Therefore, \eqref{ICP53} holds for $i_0=2$. From the discussion above, we know that $(F,G)$ satisfies Assumption \ref{ICPA51}.
\end{exam}

Now we further assume the following conditions.
\begin{condition}\label{ICPC53}
Given $\Lambda=\left({\mathcal A}^{(1)},\ldots,{\mathcal A}^{(m-1)}\right)\in \mathcal{F}_{m,n}$, $\Theta=\left({\mathcal B}^{(1)},\ldots,{\mathcal B}^{(l-1)}\right)\in \mathcal{F}_{l,n}$ and ${\bm a, \bm b}\in\mathbb{R}^n$. If there exists a sequence $\{{\bm x}^{(i)}\}$ satisfying $\|{\bm x}^{(i)}\|\rightarrow\infty$ such that
\begin{equation}\label{ICP54}
 \frac{\left[-\left(\sum_{k=1}^{m-1}{\mathcal A}^{(k)}({\bm x}^{(i)})^{m-k}+{\bm a}\right)\right]_+}{\|{\bm x}^{(i)}\|}\rightarrow\bm 0,~~\frac{\left[-\left(\sum_{p=1}^{l-1}{\mathcal B}^{(p)}({\bm x}^{(i)})^{l-p}+{\bm b}\right)\right]_+}{\|{\bm x}^{(i)}\|}\rightarrow\bm 0,
\end{equation}
as $i\rightarrow\infty$, then there exists an index $j_0\in[n]$ such that
\begin{equation}\label{ICP554}
\limsup_{i\rightarrow\infty}\frac{\min\left\{\left(\sum_{k=1}^{m-1}{\mathcal A}^{(k)}({\bm x}^{(i)})^{m-k}+{\bm a}\right)_{j_0},\left(\sum_{p=1}^{l-1}{\mathcal B}^{(p)}({\bm x}^{(i)})^{l-p}+{\bm b}\right)_{j_0}\right\}}{\|{\bm x}^{(i)}\|}> 0.
\end{equation}
\end{condition}

Then, we have the following lemma.

\begin{lemma}\label{ICPlemma}
Given $\Lambda=\left({\mathcal A}^{(1)},\ldots,{\mathcal A}^{(m-1)}\right)\in \mathcal{F}_{m,n}$, $\Theta=\left({\mathcal B}^{(1)},\ldots,{\mathcal B}^{(l-1)}\right)\in \mathcal{F}_{l,n}$ and ${\bm a, \bm b}\in\mathbb{R}^n$. If $({\mathcal A}^{(1)},{\mathcal B}^{(1)})$ is an ${\bf R}_0$-tensor pair, then Condition \ref{ICPC53} holds.
\end{lemma}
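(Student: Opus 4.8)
The plan is to extract the leading-tensor asymptotics from the rescaled sequence and then invoke the $\mathbf{R}_0$-tensor pair hypothesis on the resulting limit. If no sequence satisfying \eqref{ICP54} exists, then Condition~\ref{ICPC53} holds vacuously, so I may assume such a sequence $\{\bm x^{(i)}\}$ is given, with $\|\bm x^{(i)}\|\to\infty$. Since the unit vectors $\bm x^{(i)}/\|\bm x^{(i)}\|$ lie on the compact unit sphere, I would pass to a subsequence (and relabel) so that $\bm x^{(i)}/\|\bm x^{(i)}\|\to\bar{\bm x}$ with $\|\bar{\bm x}\|=1$, in particular $\bar{\bm x}\in\mathbb{R}^n\backslash\{\bm 0\}$. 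Writing $F(\bm x)=\sum_{k=1}^{m-1}\mathcal{A}^{(k)}\bm x^{m-k}+\bm a$ and $G(\bm x)=\sum_{p=1}^{l-1}\mathcal{B}^{(p)}\bm x^{l-p}+\bm b$, the goal is to produce an index $j_0$ witnessing \eqref{ICP554}, and the point is that the only obstruction to doing so forces $\bar{\bm x}$ to solve the $\mathbf{R}_0$ system \eqref{equation1.3}.

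First I would recover the nonnegativity of the leading terms. Because $m\ge 2$, one has $\|\bm x^{(i)}\|^{m-1}\ge\|\bm x^{(i)}\|$ for large $i$, so the first limit in \eqref{ICP54} forces $[-F(\bm x^{(i)})]_+/\|\bm x^{(i)}\|^{m-1}\to\bm 0$. Using positive homogeneity of $[\cdot]_+$ together with the fact that only the degree-$(m-1)$ term survives the division, $F(\bm x^{(i)})/\|\bm x^{(i)}\|^{m-1}\to\mathcal{A}^{(1)}\bar{\bm x}^{m-1}$, whence $\mathcal{A}^{(1)}\bar{\bm x}^{m-1}\ge\bm 0$; the analogous computation with exponent $l-1$ gives $\mathcal{B}^{(1)}\bar{\bm x}^{l-1}\ge\bm 0$. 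Now I would split into two cases. If some coordinate $j$ satisfies $(\mathcal{A}^{(1)}\bar{\bm x}^{m-1})_j>0$ and $(\mathcal{B}^{(1)}\bar{\bm x}^{l-1})_j>0$, then since $F_j(\bm x^{(i)})/\|\bm x^{(i)}\|=\|\bm x^{(i)}\|^{m-2}\cdot F_j(\bm x^{(i)})/\|\bm x^{(i)}\|^{m-1}$ with $\|\bm x^{(i)}\|^{m-2}\ge 1$, both $F_j(\bm x^{(i)})/\|\bm x^{(i)}\|$ and $G_j(\bm x^{(i)})/\|\bm x^{(i)}\|$ are eventually bounded below by a positive constant, hence so is their minimum; this gives $\limsup_i \min\{F_j(\bm x^{(i)}),G_j(\bm x^{(i)})\}/\|\bm x^{(i)}\|>0$ along the subsequence, and since a subsequential $\limsup$ is dominated by the full-sequence $\limsup$, \eqref{ICP554} holds with $j_0=j$. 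Otherwise every coordinate satisfies $(\mathcal{A}^{(1)}\bar{\bm x}^{m-1})_j\,(\mathcal{B}^{(1)}\bar{\bm x}^{l-1})_j=0$, so together with the two nonnegativities one gets $\langle\mathcal{A}^{(1)}\bar{\bm x}^{m-1},\mathcal{B}^{(1)}\bar{\bm x}^{l-1}\rangle=0$; then $\bar{\bm x}\in\mathbb{R}^n\backslash\{\bm 0\}$ satisfies \eqref{equation1.3}, contradicting that $(\mathcal{A}^{(1)},\mathcal{B}^{(1)})$ is an $\mathbf{R}_0$-tensor pair. Thus this second case cannot occur, and \eqref{ICP554} always holds.

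The hard part will be the bookkeeping around the two different normalizations rather than any deep idea. I must be careful that the hypothesis \eqref{ICP54} is stated with denominator $\|\bm x^{(i)}\|$, whereas the natural scales of $F$ and $G$ are $\|\bm x^{(i)}\|^{m-1}$ and $\|\bm x^{(i)}\|^{l-1}$; the observation is that dividing by the smaller quantity $\|\bm x^{(i)}\|$ yields a \emph{stronger} hypothesis, so the weaker consequence obtained after dividing by $\|\bm x^{(i)}\|^{m-1}$ suffices to extract nonnegativity of the leading term. Conversely, in the positive-coordinate case the same exponent gap works in my favour, since the factors $\|\bm x^{(i)}\|^{m-2}\ge 1$ and $\|\bm x^{(i)}\|^{l-2}\ge 1$ only help the relevant ratios stay bounded away from zero; this also smoothly covers the case $m\neq l$, where these factors may diverge, without affecting the conclusion. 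The remaining routine points are verifying that the lower-order tensor terms and the constant vectors $\bm a,\bm b$ vanish under the rescaling, and that passing to the convergent subsequence does not weaken the full-sequence $\limsup$ appearing in \eqref{ICP554}.
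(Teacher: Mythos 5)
Your proof is correct and follows essentially the same route as the paper's: both arguments rescale by $\|\bm x^{(i)}\|^{m-1}$ and $\|\bm x^{(i)}\|^{l-1}$, pass to a unit limit $\bar{\bm x}\neq\bm 0$, deduce $\mathcal{A}^{(1)}\bar{\bm x}^{m-1}\geq\bm 0$ and $\mathcal{B}^{(1)}\bar{\bm x}^{l-1}\geq\bm 0$ from \eqref{ICP54}, and then contradict the $\mathbf{R}_0$-pair hypothesis through the system \eqref{equation1.3}. The only difference is organizational: you argue contrapositively (a coordinate where both leading terms are strictly positive witnesses \eqref{ICP554} directly, so its absence forces $\langle\mathcal{A}^{(1)}\bar{\bm x}^{m-1},\mathcal{B}^{(1)}\bar{\bm x}^{l-1}\rangle=0$), whereas the paper assumes \eqref{ICP554} fails for every index and extracts the same complementarity from that negation via a case analysis on which of $F_{\bar j},G_{\bar j}$ attains the minimum along subsequences.
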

\begin{proof}
We prove that Condition \ref{ICPC53} holds by contradiction. Suppose there exists a sequence $\{{\bm x}^{(i)}\}\subset\mathbb{R}^n$ satisfying \eqref{ICP54}, such that (\ref {ICP554}) does not hold. Then for every given $\bar j\in [n]$, it holds that
\begin{equation}\label{ICP55}
\limsup_{i\rightarrow \infty}\frac{\min\left\{\left(\sum_{k=1}^{m-1}{\mathcal A}^{(k)}({\bm x}^{(i)})^{m-k}+{\bm a}\right)_{\bar j},\left(\sum_{p=1}^{l-1}{\mathcal B}^{(p)}({\bm x}^{(i)})^{l-p}+{\bm b}\right)_{\bar j}\right\}}{\|{\bm x}^{(i)}\|}\leq 0.
\end{equation}

Since the sequence $\left\{{\bm x}^{(i)}/\|{\bm x}^{(i)}\|\right\}$ is bounded, without loss of generality, we assume ${\bm x}^{(i)}/\|{\bm x}^{(i)}\|\rightarrow\bar{\bm x}$ as $i\rightarrow\infty$. It is obvious that $\bar{\bm x}\in\mathbb{R}^n\backslash\{\bm 0\}$. Hereafter, we claim that ${\mathcal A}^{(1)}\bar{\bm x}^{m-1}\geq \bm 0$ and ${\mathcal B}^{(1)}\bar{\bm x}^{l-1}\geq \bm 0$.
In fact, for every given $\bar j\in [n]$, if there exists a subsequence $\{\bm x^{(i_j)}\}$ of $\{{\bm x}^{(i)}\}$ so that 
$$\left(\sum_{k=1}^{m-1}{\mathcal A}^{(k)}({\bm x}^{(i_j)})^{m-k}+{\bm a}\right)_{\bar j}\leq0\;\;\;{\rm for~every}\;\; i_j.$$ Then, we have
\begin{align*}
0&=\lim_{i_j\rightarrow\infty}\frac{\left[-\left(\sum_{k=1}^{m-1}{\mathcal A}^{(k)}({\bm x}^{(i_j)})^{m-k}+{\bm a}\right)_{\bar j}\right]_+}{\|\bm
x^{(i_j)}\|}\\
&=\lim_{i_j\rightarrow\infty}\frac{-\left(\sum_{k=1}^{m-1}{\mathcal A}^{(k)}({\bm x}^{(i_j)})^{m-k}+{\bm a}\right)_{\bar j}}{\|{\bm x}^{(i_j)}\|^{m-1}}\\
&=-({\mathcal A}^{(1)}\bar{\bm x}^{m-1})_{\bar j};
\end{align*}
otherwise, we obtain
\begin{align*}
0&\leq\lim_{i_j\rightarrow\infty}\frac{\left(\sum_{k=1}^{m-1}{\mathcal A}^{(k)}({\bm x}^{(i_j)})^{m-k}+{\bm a}\right)_{\bar j}}{\|\bm
x^{(i_j)}\|^{m-1}}\\
&=\lim_{i_j\rightarrow\infty}\frac{\left({\mathcal A}^{(1)}({\bm x}^{(i_j)})^{m-1}\right)_{\bar j}}{\|{\bm x}^{(i_j)}\|^{m-1}}\\
&=({\mathcal A}^{(1)}\bar{\bm x}^{m-1})_{\bar j}.
\end{align*}
By combining the two situations above together, we obtain that $({\mathcal A}^{(1)}\bar{\bm x}^{m-1})_{\bar j}\geq0$ for every ${\bar j}\in [n]$. Similarly, we also can obtain that $({\mathcal B}^{(1)}\bar{\bm x}^{l-1})_{\bar j}\geq0$ for every ${\bar j}\in [n]$.

On the other hand, if there exists a subsequence $\{\bm x^{(i_j)}\}$ of $\{\bm x^{(i)}\}$ such that
\begin{equation}\label{TTHe1}
\left(\sum_{k=1}^{m-1}{\mathcal A}^{(k)}({\bm x}^{(i_j)})^{m-k}+{\bm a}\right)_{\bar j}\leq\left(\sum_{p=1}^{l-1}{\mathcal B}^{(p)}({\bm x}^{(i_j)})^{l-p}+{\bm b}\right)_{\bar j}, ~~~~\forall~ i_j,
 \end{equation}
 then by \eqref{ICP55}, we obtain
$$
\limsup_{i_j\rightarrow\infty}\frac{\left(\sum_{k=1}^{m-1}{\mathcal A}^{(k)}({\bm x}^{(i_j)})^{m-k}+{\bm a}\right)_{\bar j}}{\|\bm x^{(i_j)}\|}\leq 0.
$$
Consequently, we know that
$$
(\mathcal{A}^{(1)}\bar{\bm x}^{m-1})_{\bar j}=\lim_{i_j\rightarrow\infty}\frac{\left(\sum_{k=1}^{m-1}{\mathcal A}^{(k)}({\bm x}^{(i_j)})^{m-k}+{\bm a}\right)_{\bar j}}{\|\bm x^{(i_j)}\|^{m-1}}\leq 0,
$$
which, together with the obtained result that $(\mathcal{A}^{(1)}\bar{\bm x}^{m-1})_{\bar j}\geq 0$, implies that $(\mathcal{A}^{(1)}\bar{\bm x}^{m-1})_{\bar j}=0$. If (\ref{TTHe1}) does not hold, then in a similar way, we may know that
$(\mathcal{B}^{(1)}\bar{\bm x}^{l-1})_{\bar j}\leq 0$, and hence we have $(\mathcal{B}^{(1)}\bar{\bm x}^{l-1})_{\bar j}=0$.

From the discussion above, we know that $\bar{\bm x}\in\mathbb{R}^n\backslash\{\bm
0\}$ satisfies the system (\ref{equation1.3}) with $K=\mathbb{R}_+^n$, which contradicts the condition that $({\mathcal A}^{(1)},{\mathcal B}^{(1)})$ is an ${\bf R}_0$-tensor pair. Thus Condition \ref{ICPC53} holds.
\end{proof}

We now present the main result in this section, which shows that the natural residual function $r(\bm x)$ defined in (\ref{ICP52}) is a global Lipschitzian error bound for ${\rm GPCP}(\Lambda,\bm a,\Theta,\bm b)$ under some appropriate conditions.
\begin{theorem}\label{ICPI51}
Given $\Lambda=\left({\mathcal A}^{(1)},\ldots,{\mathcal A}^{(m-1)}\right)\in \mathcal{F}_{m,n}$, $\Theta=\left({\mathcal B}^{(1)},\ldots,{\mathcal B}^{(l-1)}\right)\in \mathcal{F}_{l,n}$ and ${\bm a, \bm b}\in\mathbb{R}^n$.  Suppose that Condition \ref{ICPC53} holds and $\Omega$ is nonempty. If Assumption \ref{ICPA51} holds, then $r(\bm x)$ is a global Lipschitzian error bound for ${\rm GPCP}(\Lambda,\bm a,\Theta,\bm b)$.
\end{theorem}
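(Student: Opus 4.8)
The plan is to argue by contradiction, splitting the offending sequence according to whether it stays bounded or escapes to infinity: in the bounded regime Assumption \ref{ICPA51} supplies the needed lower bound on the residual, while in the unbounded regime Condition \ref{ICPC53} does. First I would record that $\Omega=r^{-1}(\{0\})$ is closed, since $r$ defined in \eqref{ICP52} is continuous; together with the standing hypothesis that $\Omega\neq\emptyset$, this lets me fix some $\bm y_0\in\Omega$. If $r$ were \emph{not} a global Lipschitzian error bound, then for every positive integer $i$ there would exist $\bm x^{(i)}$ with ${\rm dist}(\bm x^{(i)},\Omega)>i\,r(\bm x^{(i)})$. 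Such a point cannot lie in $\Omega$, so $d_i:={\rm dist}(\bm x^{(i)},\Omega)>0$ and $r(\bm x^{(i)})>0$, and dividing gives $r(\bm x^{(i)})/d_i<1/i\to 0$.

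Before separating the cases I would isolate two elementary facts, valid at every $\bm x$ and used throughout. From $\min\{F_j(\bm x),G_j(\bm x)\}\le F_j(\bm x)$ one checks componentwise that $[-F_j(\bm x)]_+\le|\min\{F_j(\bm x),G_j(\bm x)\}|$, and symmetrically for $G_j$; squaring and summing over $j\in[n]$ yields $\|[-F(\bm x)]_+\|\le r(\bm x)$ and $\|[-G(\bm x)]_+\|\le r(\bm x)$. The second fact is that $r(\bm x)\ge|\min\{F_{j_0}(\bm x),G_{j_0}(\bm x)\}|$ for any single index $j_0$, since $r$ is the Euclidean norm of the vector of these minima. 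These turn smallness of $r$ into smallness of the quantities appearing in \eqref{ICP54}, and conversely let the scalar quantities in \eqref{ICP53} and \eqref{ICP554} push $r$ from below.

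\textbf{Bounded case.} If $\{\bm x^{(i)}\}$ admits a bounded subsequence, I pass to a convergent one, say $\bm x^{(i)}\to\hat{\bm x}$. The estimate $d_i\le\|\bm x^{(i)}-\bm y_0\|$ keeps $\{d_i\}$ bounded, so $r(\bm x^{(i)})<d_i/i\to 0$ and continuity of $r$ forces $\hat{\bm x}\in\Omega$, while $\bm x^{(i)}\ne\hat{\bm x}$ because $\bm x^{(i)}\notin\Omega$. Since $\hat{\bm x}\in\Omega$ gives $d_i\le\|\bm x^{(i)}-\hat{\bm x}\|$, I obtain $r(\bm x^{(i)})/\|\bm x^{(i)}-\hat{\bm x}\|\le r(\bm x^{(i)})/d_i\to 0$. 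But Assumption \ref{ICPA51} (with $\bar{\bm x}=\hat{\bm x}$) produces a subsequence and an index $j_0$ along which $\min\{F_{j_0},G_{j_0}\}/\|\bm x^{(i_j)}-\hat{\bm x}\|$ tends to a nonzero limit, and $r\ge|\min_{j_0}|$ then keeps this very quotient bounded away from $0$ — a contradiction. \textbf{Unbounded case.} Otherwise $\|\bm x^{(i)}\|\to\infty$, so for large $i$ we have $d_i\le\|\bm x^{(i)}-\bm y_0\|\le 2\|\bm x^{(i)}\|$ and hence $r(\bm x^{(i)})/\|\bm x^{(i)}\|\le 2\,r(\bm x^{(i)})/d_i\to 0$. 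The two componentwise facts give $[-F(\bm x^{(i)})]_+/\|\bm x^{(i)}\|\to\bm 0$ and $[-G(\bm x^{(i)})]_+/\|\bm x^{(i)}\|\to\bm 0$, so the hypothesis of Condition \ref{ICPC53} holds; it then yields an index $j_0$ with $\limsup_i\min\{F_{j_0}(\bm x^{(i)}),G_{j_0}(\bm x^{(i)})\}/\|\bm x^{(i)}\|>0$, and $r(\bm x^{(i)})\ge\min\{F_{j_0},G_{j_0}\}$ forces $\limsup_i r(\bm x^{(i)})/\|\bm x^{(i)}\|>0$, again contradicting $r(\bm x^{(i)})/\|\bm x^{(i)}\|\to 0$.

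Since both regimes are impossible, $r$ is a global Lipschitzian error bound. The one delicate passage — and where I expect the real content to reside — is verifying that control of the single residual $r$ simultaneously controls both projected negative parts $[-F(\bm x)]_+$ and $[-G(\bm x)]_+$ so that the hypothesis \eqref{ICP54} of Condition \ref{ICPC53} is actually triggered, and that the cheap one-index lower bound $r\ge|\min_{j_0}|$ is nevertheless strong enough to collide with the strict positivity in \eqref{ICP554} (and with the nonzero limit in \eqref{ICP53}); the rest is routine bookkeeping between the bounded and unbounded alternatives.
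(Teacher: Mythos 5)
Your proof is correct, but it is organized quite differently from the paper's. The paper proceeds in two stages: it first proves that $r(\bm x)$ is a \emph{local} Lipschitzian error bound (contradiction sequence with $r(\bm x^{(i)})\le\varepsilon$, shown to be bounded by a case analysis on whether \eqref{ICP54} holds, then Assumption \ref{ICPA51} applied at the limit point), and then bootstraps to the global statement via a second contradiction argument, again splitting into the cases where \eqref{ICP54} holds or fails, the latter handled through \eqref{ICP513}/\eqref{ICP512}. You instead run a single contradiction with a clean dichotomy: bounded subsequence (handled by Assumption \ref{ICPA51}, essentially as in the paper) versus $\|\bm x^{(i)}\|\to\infty$ (handled by Condition \ref{ICPC53}). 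The structural simplification comes from your observation that $\|[-F(\bm x)]_+\|\le r(\bm x)$ and $\|[-G(\bm x)]_+\|\le r(\bm x)$, so that $r(\bm x^{(i)})/\|\bm x^{(i)}\|\to 0$ \emph{automatically} verifies the hypothesis \eqref{ICP54} of Condition \ref{ICPC53}; the paper uses the contrapositive of the same fact (failure of \eqref{ICP54} forces $r(\bm x^{(i)})\to\infty$, or $r(\bm x^{(i)})\ge\bar\varepsilon\|\bm x^{(i)}\|$) but only inside its case analysis, which your packaging makes unnecessary. What the paper's route buys is the local error bound as an explicit intermediate result (of independent interest, and in line with the template of Huang's semidefinite-complementarity argument cited there); what your route buys is brevity — no threshold $\varepsilon$, no duplicated case distinctions, and both hypotheses (Condition \ref{ICPC53} and Assumption \ref{ICPA51}) invoked exactly once each.
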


\begin{proof}
The proof is divided into two parts. Concretely, we first prove that $r(\bm x)$ is a local Lipschitzian error bound for ${\rm GPCP}(\Lambda,\bm a,\Theta,\bm b)$. Then, we prove that $r(\bm x)$ is a global Lipschitzian error bound for ${\rm GPCP}(\Lambda,\bm a,\Theta,\bm b)$.

{\bf Part} (i). Suppose, on the contrary, that $r(\bm x)$ is not a local Lipschitzian error bound for ${\rm GPCP}(\Lambda,\bm a,\Theta,\bm b)$. It then follows from Definition \ref{ICPD51} that there is a sequence $\{{\bm x}^{(i)}\}\subset\mathbb{R}^n$ satisfying $r({\bm x}^{(i)})\leq\varepsilon$ such that
\begin{align}\label{ICP511}
\frac{r({\bm x}^{(i)})}{{\rm dist}({\bm x}^{(i)},\Omega)}
=\frac{\left\|\min\left\{\sum_{k=1}^{m-1}{\mathcal A}^{(k)}({\bm x}^{(i)})^{m-k}+{\bm a},\sum_{p=1}^{l-1}{\mathcal B}^{(p)}({\bm x}^{(i)})^{l-p}+{\bm b}\right\}\right\|}{{\rm dist}({\bm x}^{(i)},\Omega)}\rightarrow0
\end{align}
as $ i\rightarrow\infty$. We now show that the sequence $\{{\bm x}^{(i)}\}$ is bounded. Assume that $\{\|{\bm x}^{(i)}\|\}\rightarrow\infty$ as $i\rightarrow\infty$, we will derive a contradiction. Obviously, it follows that either $\{{\bm x}^{(i)}\}$ satisfies \eqref{ICP54} or $\{{\bm x}^{(i)}\}$ does not satisfy \eqref{ICP54}. Therefore, we consider the following two cases.

{\bf Case} (a). If $\{{\bm x}^{(i)}\}$ satisfies \eqref{ICP54}, it follows from Condition \ref{ICPC53} that there is an  index $i_0\in[n]$ such that
$$\limsup_{i\rightarrow\infty}\frac{\min\left\{\left(\sum_{k=1}^{m-1}{\mathcal A}^{(k)}({\bm x}^{(i)})^{m-k}+{\bm a}\right)_{i_0},\left(\sum_{p=1}^{l-1}{\mathcal B}^{(p)}({\bm x}^{(i)})^{l-p}+{\bm b}\right)_{i_0}\right\}}{\|{\bm x}^{(i)}\|}> 0,$$
which implies that exists a subsequence $\{{\bm x}^{(i_j)}\}$ of $\{\bm x^{(i)}\}$ such that
$$
\min\left\{\left(\sum_{k=1}^{m-1}{\mathcal A}^{(k)}({\bm x}^{(i_j)})^{m-k}+{\bm a}\right)_{i_0},\left(\sum_{p=1}^{l-1}{\mathcal B}^{(p)}({\bm x}^{(i_j)})^{l-p}+{\bm b}\right)_{i_0}\right\}\rightarrow\infty
$$
as $i_j\rightarrow\infty$. By the definition of $r(\bm x)$ we further obtain that $r({\bm x}^{(i_j)})\rightarrow\infty$ as $i_j\rightarrow\infty$, which is a contradiction with $r(\bm x^{(i_j)})\leq \varepsilon$.

{\bf Case} (b). If $\{{\bm x}^{(i)}\}$ does not satisfy \eqref{ICP54}, then we have either
\begin{equation}\label{ICP513}
\frac{\left[-\left(\sum_{k=1}^{m-1}{\mathcal A}^{(k)}({\bm x}^{(i)})^{m-k}+{\bm a}\right)\right]_+}{\|{\bm x}^{(i)}\|}\nrightarrow\bm 0,
\end{equation}
or
\begin{equation}\label{ICP512}
\frac{\left[-\left(\sum_{p=1}^{l-1}{\mathcal B}^{(p)}({\bm x}^{(i)})^{l-p}+{\bm b}\right)\right]_+}{\|{\bm x}^{(i)}\|}\nrightarrow\bm 0.
\end{equation}
When (\ref{ICP513}) holds, there exist an index $j_0\in[n]$ and a subsequence $\{\bm x^{(i_j)}\}$ of $\{\bm x^{(i)}\}$, such that
$$\left(\sum_{k=1}^{m-1}{\mathcal A}^{(k)}({\bm x}^{(i_j)})^{m-k}+{\bm a}\right)_{j_0}\rightarrow-\infty \quad {\rm as}\quad i_j\rightarrow\infty.$$
Consequently, we have
$$
\min\left\{\left(\sum_{k=1}^{m-1}{\mathcal A}^{(k)}({\bm x}^{(i_j)})^{m-k}+{\bm a}\right)_{j_0},\left(\sum_{p=1}^{l-1}{\mathcal B}^{(p)}({\bm x}^{(i_j)})^{l-p}+{\bm b}\right)_{j_0}\right\}\rightarrow-\infty
$$
and hence $r({\bm x}^{(i_j)})\rightarrow\infty$ as $i_j\rightarrow\infty$, which is a contradiction. Similarly, when (\ref{ICP512}) holds, there must be an index $j_0\in[n]$ and a subsequence $\{\bm x^{(i_s)}\}\subset\{\bm x^{(i)}\}$ such that $\left(\sum_{p=1}^{l-1}{\mathcal B}^{(p)}({\bm x}^{(i_s)})^{l-p}+{\bm b}\right)_{j_0}\rightarrow-\infty$ as $i_s\rightarrow\infty$. Hence we obtain that $r({\bm x}^{(i_s)})\rightarrow\infty$ as $i_s\rightarrow\infty$, which is also a contradiction.

Both {\bf Cases} (a) and (b) indicate that the sequence $\{{\bm x}^{(i)}\}$ is bounded. Without loss of generality, we assume $\bm x^{(i)}\rightarrow \bar{\bm x}$ as $i\rightarrow\infty$. From \eqref{ICP511}, we have $$
r(\bm x^{(i)})=\left\|\min\left\{\sum_{k=1}^{m-1}{\mathcal A}^{(k)}({\bm x}^{(i)})^{m-k}+{\bm a},\sum_{p=1}^{l-1}{\mathcal B}^{(p)}({\bm x}^{(i)})^{l-p}+{\bm b}\right\}\right\|\rightarrow0
$$
as $i\rightarrow\infty$. Thus, $r(\bar{\bm x})=0$, and hence $\bar{\bm x}$ is a solution of ${\rm GPCP}(\Lambda,\bm a,\Theta,\bm b)$, i.e., $\bar{\bm x}\in\Omega$. Furthermore, by Assumption \ref{ICPA51}, we know that there exists a subsequence $\{\bm x^{(i_j)}\}$ of $\{\bm x^{(i)}\}$ such that
$$
\frac{r({\bm x}^{(i_j)})}{{\rm dist}({\bm x}^{(i_j)},\Omega)}\nrightarrow0~~{\rm as}~~i_j\rightarrow\infty,
$$
which contradicts \eqref{ICP511}. This indicates that $r(\bm x)$ is a local Lipschitzian error
bound for ${\rm GPCP}(\Lambda,\bm a,\Theta,\bm b)$.

{\bf Part} (ii). In terms of the result of {\bf Part} (i), we now show that $r(\bm x)$ is a global Lipschitzian error bound for ${\rm GPCP}(\Lambda,\bm a,\Theta,\bm b)$. Suppose, on the contrary, that the result
does not hold. Then it follows from Definition \ref{ICPD51} that for each positive integer $i$, there
exists an ${\bm x}^{(i)}\in \mathbb{R}^n$ such that
\begin{equation}\label{ICP514}
\|{\bm x}^{(i)}-\bar{\bm x}\|\geq {\rm dist}({\bm x}^{(i)},\Omega)> i\cdot r({\bm x}^{(i)})
\end{equation}
where $\bar{\bm x}$ is a fixed solution of ${\rm GPCP}(\Lambda,\bm a,\Theta,\bm b)$. Since $r(\bm x)$ is a local Lipschitzian error bound for the problem ${\rm GPCP}(\Lambda,\bm a,\Theta,\bm b)$ by {\bf Part} (i), there exists a positive integer $i_0$ and a constant $\varepsilon>0$ such that $r({\bm x}^{(i)})>\varepsilon$ for all $i>i_0$, which, together with \eqref{ICP514}, implies that $
\|{\bm x}^{(i)}\|\rightarrow\infty$ as $i\rightarrow\infty$. Obviously, we have either $\{{\bm x}^{(i)}\}$ satisfies \eqref{ICP54} or $\{{\bm x}^{(i)}\}$ does not satisfy \eqref{ICP54}.  If $\{{\bm x}^{(i)}\}$ satisfies \eqref{ICP54}, then \eqref{ICP554} holds by Condition \ref{ICPC53}. Consequently, by the definition of $r(\bm x)$, we have
\begin{equation}\label{ICP517}
\limsup_{i\rightarrow\infty}\frac{r({\bm x}^{(i)})}{\|{\bm x}^{(i)}\|}>0.
\end{equation}
If $\{{\bm x}^{(i)}\}$ does not satisfy \eqref{ICP54}, then since $\|{\bm x}^{(i)}\|\rightarrow \infty$ as $i\rightarrow\infty$, we have either \eqref{ICP513} or \eqref{ICP512} holds. If (\ref{ICP513}) holds, then there exist positive number $\bar \varepsilon$, an index $j_0\in [n]$ and a subsequence $\{{\bm x}^{(i_j)}\}$ of $\{\bm x^{(i)}\}$ such that
$$
\left(\sum_{k=1}^{m-1}{\mathcal A}^{(k)}({\bm x}^{(i_j)})^{m-k}+{\bm a}\right)_{j_0}\leq-\bar \varepsilon\|{\bm x}^{(i_j)}\|
$$
for every $i_j$, which implies $r(\bm x^{(i_j)})\geq \bar \varepsilon\|\bm x^{(i_j)}\|$ for every $i_j$. Hence, (\ref{ICP517}) holds. Similarly, when (\ref{ICP512}) holds, we also can claim that (\ref{ICP517}) holds.

Take any given positive number $M$. Considering \eqref{ICP514} with $i\geq M$ and \eqref{ICP517}, by letting $i\rightarrow \infty$, we have
$$
1=\lim_{i\rightarrow\infty}\frac{\|{\bm x}^{(i)}-\bar{\bm x}\|}{\|{\bm x}^{(i)}\|}\geq\limsup_{i\rightarrow\infty}~ i\frac{r({\bm x}^{(i)})}{\|{\bm x}^{(i)}\|}\geq M\limsup_{i\rightarrow\infty}\frac{r({\bm x}^{(i)})}{\|{\bm x}^{(i)}\|},
$$
which means $1\geq +\infty$ from the arbitrariness of $M$. A contradiction yields and the proof is complete.
\end{proof}

From Theorem \ref{theorem1.6}, Lemma \ref{ICPlemma}, and Theorem \ref{ICPI51}, we immediately obtain the following global Lipschitzian error bound.
\begin{theorem}
Let $\Lambda=\left({\mathcal A}^{(1)},\ldots,{\mathcal A}^{(m-1)}\right)\in \mathcal{F}_{m,n}$, $\Theta=\left({\mathcal B}^{(1)},\ldots,{\mathcal B}^{(l-1)}\right)\in \mathcal{F}_{l,n}$. Suppose that the leading tensor pair $({\mathcal A}^{(1)},{\mathcal B}^{(1)})$ in $(\Lambda,\Theta)$ is an ${\bf ER}$-tensor pair.  If either (a) $m=l$ or (b) $m\neq l$, but the one of $\mathcal{A}^{(1)}$ and $\mathcal{B}^{(1)}$, whose order is even and is larger than another, is positive definite, then for any given vectors $\bm a, \bm b\in \mathbb{R}^n$ such that Assumption \ref{ICPA51} holds, $r(\bm x)$ is a global Lipschitzian error bound for ${\rm GPCP}(\Lambda,\bm a,\Theta,\bm b)$.
\end{theorem}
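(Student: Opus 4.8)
The plan is to treat this statement as a straightforward assembly of three results established earlier, with the only real task being to verify that their hypotheses are met under the stated assumptions once we fix $K:=\mathbb{R}_+^n$. The substantive ingredient is the implication, noted right after Definition \ref{ERdef}, that every ${\bf ER}^K$-tensor pair is an ${\bf R}^K_0$-tensor pair; everything else is matching terminology and chaining the quoted results.

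First I would secure the nonemptiness of $\Omega$, which is a standing hypothesis of Theorem \ref{ICPI51}. Since we work with $K:=\mathbb{R}_+^n$, the assumed ${\bf ER}$-tensor pair condition on $({\mathcal A}^{(1)},{\mathcal B}^{(1)})$ is exactly the ${\bf ER}^K$-tensor pair condition for $K=\mathbb{R}_+^n$. Combined with the dichotomy (a) $m=l$ or (b) $m\neq l$ with the higher-order even leading tensor positive definite, which coincides verbatim with conditions (i)/(ii) of Theorem \ref{theorem1.6}, that theorem applies and guarantees that ${\rm SOL}(\Lambda,\bm a,\Theta,\bm b,\mathbb{R}_+^n)=\Omega$ is nonempty (indeed compact). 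This provides the nonemptiness of $\Omega$ needed downstream.

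Next I would verify Condition \ref{ICPC53}. By the observation following Definition \ref{ERdef}, an ${\bf ER}^K$-tensor pair is automatically an ${\bf R}^K_0$-tensor pair; specializing to $K=\mathbb{R}_+^n$, the ${\bf ER}$-tensor pair $({\mathcal A}^{(1)},{\mathcal B}^{(1)})$ is an ${\bf R}_0$-tensor pair. Lemma \ref{ICPlemma} then yields Condition \ref{ICPC53} immediately, with no further computation.

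Finally, having secured that Condition \ref{ICPC53} holds, that $\Omega\neq\emptyset$, and that Assumption \ref{ICPA51} is in force by hypothesis, I would invoke Theorem \ref{ICPI51} to conclude that the natural residual $r(\bm x)$ is a global Lipschitzian error bound for ${\rm GPCP}(\Lambda,\bm a,\Theta,\bm b)$. The main (and modest) obstacle is purely bookkeeping: ensuring that the cone-free ${\bf ER}$ and ${\bf R}_0$ terminology is correctly identified with the $K=\mathbb{R}_+^n$ instances of the cone versions, so that both Theorem \ref{theorem1.6} (through ${\bf ER}^K$) and Lemma \ref{ICPlemma} (through ${\bf R}_0$) can be applied to the same leading pair without any gap. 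No new estimate is required.
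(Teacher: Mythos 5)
Your proposal is correct and follows exactly the paper's own route: the paper derives this theorem "immediately" by chaining Theorem \ref{theorem1.6} (nonemptiness of $\Omega$ via the ${\bf ER}$-pair and conditions (a)/(b) with $K=\mathbb{R}_+^n$), Lemma \ref{ICPlemma} (Condition \ref{ICPC53} via the ${\bf ER}\Rightarrow{\bf R}_0$ implication), and Theorem \ref{ICPI51}. Your verification of the hypotheses and the cone-specialization bookkeeping is precisely the intended argument.
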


Similarly, based upon Remarks \ref{Rem1} and \ref{Rem2}, we obtain the following theorem.
\begin{theorem}
Let $\Lambda=\left({\mathcal A}^{(1)},\ldots,{\mathcal A}^{(m-1)}\right)\in \mathcal{F}_{m,n}$, $\Theta=\left({\mathcal B}^{(1)},\ldots,{\mathcal B}^{(l-1)}\right)\in \mathcal{F}_{l,n}$. Suppose that the leading tensor pair $({\mathcal A}^{(1)},{\mathcal B}^{(1)})$ in $(\Lambda,\Theta)$ is an ${\bf ER}$-tensor pair. If $m>l$ and $\mathcal{A}^{(1)}$ is strictly copositive (see \cite{Qi13}), then for any given vectors $\bm a, \bm b\in \mathbb{R}^n$ satisfying Assumption \ref{ICPA51} and the function $F$ in \eqref{gpcp} satisfying $S(\mathbb{R}_+^n)\subset \mathbb{R}_+^n$, where $S$ is same to that in Remark \ref{Rem2}, $r(\bm x)$ is a global Lipschitzian error bound for ${\rm GPCP}(\Lambda,\bm a,\Theta,\bm b)$.
\end{theorem}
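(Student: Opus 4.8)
The plan is to verify the three hypotheses of Theorem~\ref{ICPI51}---namely that $\Omega$ is nonempty, that Condition~\ref{ICPC53} holds, and that Assumption~\ref{ICPA51} holds---and then invoke that theorem directly. The last of these is assumed in the statement, so the genuine work lies in establishing the first two by chaining together results already proved in the paper.

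First I would establish the nonemptiness of $\Omega$. The key observation is that \emph{strict copositivity} of $\mathcal{A}^{(1)}$ is precisely the notion of \emph{strict $K$-positivity with $K=\mathbb{R}_+^n$} appearing in Remark~\ref{Rem2}: both assert $\mathcal{A}^{(1)}\bm x^{m}>0$ for every $\bm x\in\mathbb{R}_+^n\backslash\{\bm 0\}$. Since we are in the case $K=\mathbb{R}_+^n$, the hypotheses $m>l$, $S(\mathbb{R}_+^n)\subset\mathbb{R}_+^n$, and the strict copositivity of $\mathcal{A}^{(1)}$ are exactly the hypotheses required by Remark~\ref{Rem2}. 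That remark (which itself rests on Remark~\ref{Rem1}, allowing the exceptional-family argument of Theorem~\ref{theorem1.6} to be run \emph{inside} the cone $\mathbb{R}_+^n$) then yields that $\Omega={\rm SOL}(\Lambda,\bm a,\Theta,\bm b,\mathbb{R}_+^n)$ is nonempty and indeed compact.

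Next I would establish Condition~\ref{ICPC53}. By Definition~\ref{ERdef} and the observation immediately following it, every ${\bf ER}$-tensor pair is an ${\bf R}_0$-tensor pair. Since $(\mathcal{A}^{(1)},\mathcal{B}^{(1)})$ is assumed to be an ${\bf ER}$-tensor pair, it is in particular an ${\bf R}_0$-tensor pair, so Lemma~\ref{ICPlemma} applies and delivers Condition~\ref{ICPC53}. Having secured all three ingredients---$\Omega\neq\emptyset$, Condition~\ref{ICPC53}, and Assumption~\ref{ICPA51}---the conclusion that $r(\bm x)$ is a global Lipschitzian error bound for ${\rm GPCP}(\Lambda,\bm a,\Theta,\bm b)$ follows immediately from Theorem~\ref{ICPI51}.

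The only real obstacle is the nonemptiness step via Remark~\ref{Rem2}: one must correctly identify strict copositivity with strict $\mathbb{R}_+^n$-positivity, and confirm that the hypothesis $S(\mathbb{R}_+^n)\subset\mathbb{R}_+^n$ is exactly what is needed to invoke the in-cone version of the existence argument (Remark~\ref{Rem1}) in the regime $m>l$, where the leading behavior is governed solely by $\mathcal{A}^{(1)}$. Everything else reduces to a bookkeeping chain of previously proved facts, and no new estimates are required.
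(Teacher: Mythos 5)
Your proposal is correct and follows essentially the same route as the paper, which obtains this theorem by combining Remark \ref{Rem2} (nonemptiness of $\Omega$ via strict copositivity, $m>l$, and $S(\mathbb{R}_+^n)\subset\mathbb{R}_+^n$), Lemma \ref{ICPlemma} (the ${\bf ER}$-tensor pair is an ${\bf R}_0$-tensor pair, hence Condition \ref{ICPC53} holds), and Theorem \ref{ICPI51}. Your identification of strict copositivity with strict $\mathbb{R}_+^n$-positivity is exactly the link the paper relies on.
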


Now we complete this section by considering the TCPs introduced in \cite{SQ16}, i.e., the model ${\rm TCP}({\mathcal A},{\bm q})$. In this situation, we may prove that, for a given $\mathcal{A}\in \mathbb{T}_{m,n}$ and $\bm q\in \mathbb{R}^n$, if $\mathcal{A}$ is an ${\bf R}_0$-tensor, then Condition \ref{ICPC53} holds. Furthermore, by \cite[Theorem 4.2]{WHB16}, we know that if $\mathcal{A}\in \mathbb{T}_{m,n}$ is an ${\bf ER}$-tensor and $\bm q\in \mathbb{R}^n$ is given, then the solution set of ${\rm TCP}(\mathcal{A},\bm q)$ is nonempty and compact. By \cite[Theorem 3.2]{SQ14}, we know that if $\mathcal{A}\in \mathbb{T}_{m,n}$ is an ${\bf R}$-tensor (see \cite[Definition 2.2]{SQ14}) and $\bm q\in \mathbb{R}^n$ is given, then the solution set of ${\rm TCP}(\mathcal{A},\bm q)$ is nonempty. Therefore, by Theorem \ref{ICPI51}, we have the following result.

\begin{theorem}\label{TCPGEB}
Given $\mathcal{A}\in \mathbb{T}_{m,n}$ and $\bm q\in \mathbb{R}^n$. Suppose that $\mathcal{A}$ is an ${\bf ER}$-tensor (or {\bf R}-tensor). If Assumption \ref{ICPA51} with $F(\bm x)=\mathcal{A}\bm x^{m-1}+\bm q$ and $G(\bm x)=\bm x$ holds, then $r(\bm x)$ is a global Lipschitzian error bound for ${\rm TCP}(\mathcal{A},\bm q)$.
\end{theorem}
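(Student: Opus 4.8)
The plan is to recognize ${\rm TCP}(\mathcal{A},\bm q)$ as the particular instance of ${\rm GPCP}(\Lambda,\bm a,\Theta,\bm b)$ obtained by taking $\Lambda$ to consist of the single leading tensor $\mathcal{A}^{(1)}=\mathcal{A}$ with $\bm a=\bm q$, and $\Theta$ to consist of the single identity matrix $\mathcal{B}^{(1)}=\mathcal{I}\in\mathbb{T}_{2,n}$ (so that $l=2$) with $\bm b=\bm 0$. With this data one has $F(\bm x)=\mathcal{A}\bm x^{m-1}+\bm q$, $G(\bm x)=\mathcal{I}\bm x=\bm x$ and $K=\mathbb{R}_+^n$, which is exactly the model under consideration. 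The conclusion will then follow by checking that the three hypotheses of Theorem \ref{ICPI51} are met for this data, namely that Condition \ref{ICPC53} holds, that $\Omega\neq\emptyset$, and that Assumption \ref{ICPA51} holds (the last being assumed).

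First I would verify Condition \ref{ICPC53} by reducing the ${\bf R}_0$-tensor pair hypothesis of Lemma \ref{ICPlemma} to the classical ${\bf R}_0$-tensor notion. For the above data, the system \eqref{equation1.3} with $K=\mathbb{R}_+^n$ (so that $K^*=\mathbb{R}_+^n$ is self-dual and $\mathcal{B}^{(1)}\bm x^{l-1}=\mathcal{I}\bm x=\bm x$) reads: there is no $\bm x\in\mathbb{R}^n\backslash\{\bm 0\}$ with $\mathcal{A}\bm x^{m-1}\geq\bm 0$, $\bm x\geq\bm 0$ and $\langle\mathcal{A}\bm x^{m-1},\bm x\rangle=0$, which is precisely the statement that $\mathcal{A}$ is an ${\bf R}_0$-tensor. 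Since every ${\bf ER}$-tensor is an ${\bf R}_0$-tensor (indeed, as $l=2$ is even, $(\mathcal{A},\mathcal{I})$ is an ${\bf ER}$-tensor pair and hence an ${\bf R}_0$-tensor pair) and every ${\bf R}$-tensor is an ${\bf R}_0$-tensor, in either hypothesis $(\mathcal{A},\mathcal{I})$ is an ${\bf R}_0$-tensor pair. Lemma \ref{ICPlemma} then delivers Condition \ref{ICPC53}.

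Next I would settle the nonemptiness of $\Omega$. If $\mathcal{A}$ is an ${\bf ER}$-tensor, then \cite[Theorem 4.2]{WHB16} guarantees that the solution set of ${\rm TCP}(\mathcal{A},\bm q)$ is nonempty (in fact compact); if instead $\mathcal{A}$ is an ${\bf R}$-tensor, then \cite[Theorem 3.2]{SQ14} gives the nonemptiness. Hence $\Omega\neq\emptyset$ in both cases. With Condition \ref{ICPC53} verified, $\Omega\neq\emptyset$ and Assumption \ref{ICPA51} in force, Theorem \ref{ICPI51} applies directly and yields that the natural residual $r(\bm x)$ is a global Lipschitzian error bound for ${\rm TCP}(\mathcal{A},\bm q)$, which completes the argument.

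I do not anticipate a serious obstacle, since the statement is essentially a clean corollary that repackages the GPCP error-bound machinery for the TCP setting. The only point demanding care is the routine but essential collapse of the ${\bf R}_0$-tensor pair condition for $(\mathcal{A},\mathcal{I})$ to the standard ${\bf R}_0$-tensor condition on $\mathcal{A}$, which is what permits Lemma \ref{ICPlemma} to be invoked verbatim, together with matching the correct existence result (\cite{WHB16} for the ${\bf ER}$-tensor case, \cite{SQ14} for the ${\bf R}$-tensor case) to each of the two alternative hypotheses on $\mathcal{A}$.
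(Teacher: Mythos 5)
Your proposal is correct and takes essentially the same route as the paper: the paper also verifies Condition \ref{ICPC53} from the classical ${\bf R}_0$-property of $\mathcal{A}$ (the TCP specialization of Lemma \ref{ICPlemma} with the pair $(\mathcal{A},I)$, $l=2$, $\bm b=\bm 0$), obtains nonemptiness of $\Omega$ from \cite[Theorem 4.2]{WHB16} in the ${\bf ER}$-tensor case and from \cite[Theorem 3.2]{SQ14} in the ${\bf R}$-tensor case, and then concludes by Theorem \ref{ICPI51}. Your explicit check that the ${\bf R}_0$-tensor-pair condition for $(\mathcal{A},\mathcal{I})$ collapses to the standard ${\bf R}_0$-tensor condition is exactly the reduction the paper leaves implicit.
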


Note that the result in Theorem \ref{TCPGEB}, to our knowledge, is not discussed in the current TCPs and PCPs literature (e.g., see \cite{G16,LHL17,YLH17}). On the other hand, our Assumption \ref{ICPA51} is weaker than the conditions assumed in previous papers. Therefore, our result is better.

\section{Conclusion}\label{FinRem}
The GPCP under consideration is a natural generalization of TCPs and PCPs, and a special case of GCPs, but with more favorable polynomial nature that we could explore to derive interesting specialized results than the general nonlinear functions in GCPs. In this paper, we obtain some new results on GPCPs, which include the nonemptiness and compactness of the solution set, basic topological properties, and global Lipschitzian error bounds of solutions of GPCP($\Lambda,\bm a,\Theta,\bm b$) with structured (e.g., {\bf ER}-) tensor pair. In the future, we will pay attention to designing some structure-exploiting algorithms for GPCPs.

\bigskip
\noindent{\bf Acknowledgements} The authors would like to thank the two reviewers' close reading and valuable comments, which helped us improve the presentation of this paper. L. Ling was supported by Excellent Degree Thesis Foundation of HDU (No. yxlw2017018). C. Ling and H. He were supported in part by National Natural Science Foundation of China (Nos. 11571087 and 11771113) and Natural Science Foundation of Zhejiang Province (LY17A010028).


\hbox to14cm{\hrulefill}\par

\begin{flushright}
{\it Manuscript received 31 May 2018} \\ \medskip
{\it Revised 16 September 2018} \\  \medskip
{\it Accepted for publication 1 November 2018}
\end{flushright}
\bigskip

\noindent {\sc Liyun Ling}\\
Department of Mathematics, School of Science, Hangzhou Dianzi University,
Hangzhou, 310018, China.\\
E-mail address: {lingliyun@163.com} \\ \bigskip

\noindent {\sc Chen Ling}\\
Department of Mathematics, School of Science, Hangzhou Dianzi University,
Hangzhou, 310018, China.\\
E-mail address: {macling@hdu.edu.cn} \\
\bigskip

\noindent {\sc Hongjin He}\\
Department of Mathematics, School of Science, Hangzhou Dianzi University,
Hangzhou, 310018, China.\\
Corresponding author.\\
E-mail address: {hehjmath@hdu.edu.cn}

\end{document}